\DeclareMathOperator{\oGr}{Gr}
\DeclareMathOperator{\ogr}{gr}
\mathchardef\mhyphen="2D
\DeclareMathOperator{\orMod}{Mod}
\DeclareMathOperator{\ormod}{mod}
\DeclareMathOperator{\lmod}{\mhyphen mod}
\newcommand{\Hom}{\mathrm{Hom}}
\newcommand{\Ext}{\mathrm{Ext}}
\newcommand{\Endo}{\mathrm{End}}
\newcommand{\Tr}{\mathrm{Tr}}
\newcommand{\coker}{\mathrm{coker}}
\newcommand{\add}{\mathrm{add}}
\newcommand{\Add}{\mathrm{Add}}
\newcommand{\ind}{\mathrm{ind}}
\newcommand{\rad}{\mathrm{rad}}
\newcommand{\pp}{\mathrm{pp}}
\newcommand{\Gr}[1][]{\oGr^{#1 \!} \mhyphen}
\newcommand{\gr}[1][]{\ogr^{#1 \!} \mhyphen}
\newcommand{\lgr}[1][]{\mhyphen \ogr^{#1 \!}}
\newcommand{\rmod}{\ormod \mhyphen}
\newcommand{\rMod}{\orMod \mhyphen}
\newcommand{\gl}{\mbox{gr.l.}}
\newcommand{\ses}[5]{\ensuremath{0 \rightarrow #1 \stackrel{#4}{\longrightarrow} 
#2 \stackrel{#5}{\longrightarrow} #3 \rightarrow 0}}
\newcommand{\A}{\mathcal{A}}
\newcommand{\C}{\mathcal{C}}
\newcommand{\G}{\mathcal{G}}
\newtheorem{therm}{Theorem}[section]
\newtheorem{defin}[therm]{Definition}
\newtheorem{propos}[therm]{Proposition}
\newtheorem{lemma}[therm]{Lemma}
\newtheorem{coro}[therm]{Corollary}
\newtheorem{remark}[therm]{Remark}
\newtheorem{question}[therm]{Question}
\begin{document}

\title{Gradable modules over artinian rings}
\author{Alex Dugas}
\address{Department of Mathematics, University of the Pacific, 3601 Pacific Ave, Stockton, CA 95211, USA}
\email{adugas@pacific.edu}

\subjclass[2010]{16G10, }
\keywords{graded rings, artinian rings, pure projective module, gradable module, finite representation type}

\begin{abstract} Let $\Lambda$ be a $\mathbb{Z}$-graded artin algebra.  Two classical results of Gordon and Green state that if $\Lambda$ has only finitely many indecomposable gradable modules, up to isomorphism, then $\Lambda$ has finite representation type; and if $\Lambda$ has finite representation type then every $\Lambda$-module is gradable.  We generalize these results to $\mathbb{Z}$-graded right artinian rings $R$.  The key tool is a characterization of gradable modules: a f.g. right $R$-module is gradable if and only if its ``pull-up'' is pure-projective.  Using this we show that if there is a bound on the graded-lengths of f.g. indecomposable graded $R$-modules, then every f.g. $R$-module is gradable.  As another consequence, we see that if a graded artin algebra has an ungradable module, then it has a Pr\"ufer module which is not of finite type, and hence it has a generic module by work of Ringel.  
\end{abstract}

\maketitle

\section{Introduction}

Let $R = \oplus_{n} R_n$ be a $\mathbb{Z}$-graded ring.  We are interested in comparing the ordinary representation theory of $R$, i.e., the categories $\rmod R$ (resp.  $\rMod R$) of finitely generated (resp. all) right $R$-modules, to the { \it graded} representation theory of $R$ expressed by the categories $\gr R$ (resp. $\Gr R$) of finitely generated (resp. all) $\mathbb{Z}$-graded right $R$-modules.  We have a natural forgetful functor $q : \Gr R \rightarrow \rMod R$ that forgets the grading of a graded module.  We call an $R$-module $M$ {\it gradable} if it belongs to the strict image of this functor.  In this paper we consider the following finiteness properties on the category of finitely generated right $R$-modules when $R$ is right artinian.

\begin{itemize}
\item[(F1)] $R$ has finite representation type; i.e. there exist only finitely many f.g. indecomposable $R$-modules up to isomorphism.
\item[(F2)] There exist only finitely many f.g. indecomposable gradable $R$-modules up to isomorphism.
\item[(F3)] There exists a bound on the graded lengths of the f.g. indecomposable graded $R$-modules.
\item[(F4)] Every f.g. $R$-module is gradable.
\end{itemize}

The implication $(F1) \Rightarrow (F2)$ is trivial, while $(F2) \Rightarrow (F3)$ is a consequence of a theorem of Camillo and Fuller \cite{CF} (see Lemma~\ref{lemma:F2F3}).  Here, the {\it graded length} of a nonzero graded $R$-module $M = \oplus_n M_n$ is defined as $$\gl M = \sup \{n \mid M_n \neq 0\} - \inf \{n \mid M_n \neq 0\} + 1 \in \mathbb{N}\cup \{\infty\}.$$

When $R$ is an artin algebra, Gordon and Green have shown that $(F2) \Rightarrow (F4)$ and hence that $(F2) \Rightarrow (F1)$ as well \cite{GoGr2}.  They speculate also that $(F3) \Rightarrow (F4)$  in this case.   In \cite{CF}, Camillo and Fuller generalize some of Gordon's and Green's results to graded right artinian rings, and they raise the questions of whether $(F2) \Rightarrow (F1)$ or $(F1) \Rightarrow (F4)$ might remain true in this setting.  Our main result provides affirmative answers to both questions by showing that $(F3) \Rightarrow (F4)$.

\begin{therm}\label{thm:FiniteG} Let $R$ be a graded right artinian ring.  If $$G_R = \sup \{ \gl(M)\ |\ M \in \gr R\ \mbox{indecomposable}\} < \infty,$$ then every f.g. right $R$-module is gradable.   Consequently, $R$ has finite representation type if and only if there are only finitely many indecomposable gradable $R$-modules, up to isomorphism; and in this case, every f.g. $R$-module is gradable.
\end{therm}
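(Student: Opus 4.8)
The plan is to isolate the single new implication $(F3)\Rightarrow(F4)$, after which the displayed equivalence and its consequence are purely formal. Indeed, $(F1)\Rightarrow(F2)$ is trivial, and $(F2)\Rightarrow(F3)$ is Lemma~\ref{lemma:F2F3}; so if $(F2)$ holds, then $(F3)$ holds, whence $(F4)$ holds, and since now every finitely generated indecomposable $R$-module is gradable, there can be only finitely many of them --- that is, $(F1)$ holds. Thus $(F1)\Leftrightarrow(F2)$, and in either case the chain $(F1)\Rightarrow(F2)\Rightarrow(F3)\Rightarrow(F4)$ shows that every finitely generated $R$-module is gradable.

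It remains to prove $(F3)\Rightarrow(F4)$, and here the plan is to invoke the criterion we establish for gradability: a finitely generated $R$-module $M$ is gradable if and only if its pull-up $\hat M\in\Gr R$ is pure-projective. (Recall that $\hat M$ is a countably generated graded module, not finitely generated in general, and is shift-invariant, $\hat M(1)\cong\hat M$, the shift being implemented by a degree-$1$ automorphism $\sigma$ of $\hat M$.) So fix a finitely generated $R$-module $M$; I must show $\hat M$ is a direct summand of a direct sum of finitely generated graded $R$-modules. Two observations make $\hat M$ tractable. First, since $R$ is right artinian it is of finite length over $R_0$ (so $R_0$ is right artinian and $M$, being of finite length over $R$, is of finite length over $R_0$); hence the graded submodule $N\le\hat M$ generated by the finitely many homogeneous components $\hat M_0,\dots,\hat M_{G_R-1}$ --- each a finite-length $R_0$-module --- is a finitely generated graded $R$-module. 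Second, because $\sigma^i(N)\supseteq\hat M_i\oplus\cdots\oplus\hat M_{i+G_R-1}$ for every $i$, we have $\hat M=\sum_{i\in\mathbb Z}\sigma^i(N)$, so there is a natural epimorphism $\bigoplus_{i\in\mathbb Z}\sigma^i(N)\twoheadrightarrow\hat M$ from a direct sum of finitely generated graded modules (each $\sigma^i(N)$ being a shift of $N$).

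Now $(F3)$ enters. The finitely generated graded module $N$, and with it each translate $\sigma^i(N)$, is a finite direct sum of indecomposable graded modules of graded length at most $G_R$, hence supported in a window of length $G_R$; the windows of the summands of $\sigma^i(N)$ all lie within a bounded distance of $i$. The idea is to use this uniform control on supports, together with the Krull--Schmidt theorem in $\gr R$, to refine the covering $\{\sigma^i(N)\}_{i\in\mathbb Z}$ of $\hat M$ into an honest direct-sum decomposition of $\hat M$ by finitely generated graded modules --- equivalently, to split the epimorphism above. Concretely, one decomposes the finitely generated submodules $\hat M_{[-t,t]}:=\sum_{|i|\le t}\sigma^i(N)$ into indecomposables and matches the decomposition at level $t$ with that at level $t+1$ by an exchange argument; because an indecomposable summand occupies a window of length at most $G_R$, only the summands near the two ends of $[-t,t]$ can fail to carry over, so in the limit $\hat M$ is the direct sum over all $t$ of the finitely many ``new'' summands appearing at each stage. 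This is the $\mathbb Z$-graded right artinian analogue of the matching arguments used for locally bounded (locally support-finite) categories by Gordon--Green and Dowbor--Skowro\'nski. Having written $\hat M$ as a direct sum of finitely generated graded modules, it is pure-projective, so $M$ is gradable, which proves $(F3)\Rightarrow(F4)$.

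The main obstacle is exactly this last passage, from the bounded decompositions of the finitely generated submodules $\hat M_{[-t,t]}$ to a decomposition of the infinitely generated module $\hat M$: arbitrary modules over artinian rings need not decompose into indecomposables at all, so $(F3)$ must be used in an essential way both to prevent indecomposable constituents from growing without bound and to make the successive splittings of $\hat M_{[-t,t]}\hookrightarrow\hat M_{[-(t+1),t+1]}$ mutually compatible --- the same delicacy that arises in all bounded-representation-type arguments. A secondary point, which I would treat as standard background, is the package of facts about $\mathbb Z$-graded right artinian rings invoked above: the finiteness of $R$ over $R_0$, the graded Krull--Schmidt theorem, and the basic properties of the pull-up functor together with the pure-projectivity criterion for gradability.
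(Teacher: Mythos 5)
Your reduction of the theorem to $(F3)\Rightarrow(F4)$, and your use of the criterion ``$N$ gradable $\iff pN$ pure-projective,'' match the paper exactly, as does the setup of writing $pN$ as the union of an increasing chain of finitely generated graded submodules supported on growing windows. But the decisive step --- actually proving that $pN$ is pure-projective --- is left as an acknowledged ``main obstacle,'' and the route you gesture at (decompose each $\hat M_{[-t,t]}$ into indecomposables and run an exchange/matching argument to assemble a global direct-sum decomposition of $\hat M$) is both unproved and harder than necessary. The two things you are missing are: (a) a proof that a summand of a finite stage ``carries over'' to all later stages, and (b) the observation that no global decomposition is needed at all.

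For (b), the paper invokes the Puninski--Rothmaler pp-type criterion: a countably generated module is pure-projective provided every finite tuple lies in a finitely presented \emph{pure} submodule (Corollary~\ref{cor:PPcriterion}). This converts the problem into a purely local one and completely sidesteps the compatibility-of-splittings difficulty you correctly identify as delicate. For (a), the paper's mechanism is an Ext-vanishing argument rather than an exchange argument: given a tuple supported in degrees $[-j,j]$, one passes to the stage $Y_k$ with $k=j+G+d$ (where $d=\max\{|i| : R_i\neq 0\}$), takes a \emph{minimal} sum $A$ of indecomposable summands of $Y_k$ containing the tuple --- minimality plus the bound $G$ forces $A$ to be concentrated in degrees $[-j-G,\,j+G]$ --- and then shows $A$ stays a direct summand of every later $Y_{k+i}$ because $C_i=Y_{k+i}/Y_k$ is concentrated outside $[-k,k]$, so by Lemma~\ref{lemma:Syz} its syzygy is concentrated outside $[-j-G,\,j+G]$, giving $\Hom_{\Gr R}(\Omega C_i,A)=0$ and hence $\Ext^1_{\Gr R}(C_i,A)=0$; a short diagram chase then splits $A$ off $Y_{k+i}$. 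This makes $A$ a finitely presented pure submodule of $pN$ containing the tuple, and Corollary~\ref{cor:PPcriterion} finishes. Without (a) and (b) --- or a genuinely worked-out version of your matching argument in the style of Dowbor--Skowro\'nski, which would be a substantial undertaking of its own --- the proposal does not yet constitute a proof.
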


Our proof relies on a characterization of gradable modules in terms of their {\it pull-ups}, which are infinitely generated graded modules defined using the right adjoint $p$ of the forgetful functor $q$.  We review the definitions and basic properties of these modules in Section 2 for arbitrary graded rings.   In Section 3, we specialize to graded right artinian rings and prove our main characterization of gradable modules in this context.

\begin{therm}[Theorem~\ref{thm:GradablePP}, Lemma~\ref{lemma:pullupGraded}] Let $R$ be a right artinian graded ring.  A fintely generated right $R$-module $N$ is gradable if and only if its pull-up $pN$ is pure-projective if and only if $qpN \cong N^{(\omega)}$.
\end{therm}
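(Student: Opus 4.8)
\emph{Sketch of proof.} We show the three conditions on $N$ are equivalent by proving: (gradable) $\Rightarrow$ ($pN$ pure-projective) $\Rightarrow$ (gradable), and then (gradable) $\Rightarrow$ ($qpN\cong N^{(\omega)}$) $\Rightarrow$ ($pN$ pure-projective). The backbone is the isomorphism $pqX\cong\bigoplus_{m\in\mathbb{Z}}X(m)$, valid for every graded module $X$ and following at once from the adjunction $q\dashv p$ (Section~2), together with the elementary fact that a f.g.\ gradable module $N$ equals $qX$ for some f.g.\ graded $X$ (take the graded submodule generated by homogeneous lifts of a finite generating set). Granting these: if $N\cong qX$ with $X$ f.g.\ graded, then $pN\cong\bigoplus_{m}X(m)$ is a direct sum of finitely presented graded modules (as $R$ right artinian is right noetherian), hence pure-projective; and applying $q$ gives $qpN\cong\bigoplus_m qX = N^{(\mathbb{Z})}\cong N^{(\omega)}$. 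This covers the implications from gradability.

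I would next record that the forgetful functor $q$ both preserves and reflects pure-projectivity. Preservation is clear: $q$ is a left adjoint, hence preserves coproducts, and it turns a finite presentation of a graded module into one of the underlying module. For reflection, let $\pi\colon P\twoheadrightarrow X$ be a graded-pure epimorphism with $P$ graded-pure-projective; then $q\pi$ is a pure epimorphism (graded-pure epis are filtered colimits of split epis, which $q$ preserves), so if $qX$ is pure-projective, $q\pi$ has an $R$-linear section $s$. Decomposing $s$ into the graded maps $s^{(d)}\colon X\to P(d)$ (where $s^{(d)}(x)$ is the degree-$(n+d)$ component of $s(x)$ for $x\in X_n$) and comparing homogeneous components in $q\pi\circ s=\mathrm{id}_{qX}$ yields $\pi\circ s^{(0)}=\mathrm{id}_X$; thus $\pi$ splits in $\Gr R$ and $X$ is graded-pure-projective. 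Hence $pN$ is pure-projective iff $qpN$ is. Since $N$ is finitely presented, $N^{(\omega)}$ is pure-projective, so $qpN\cong N^{(\omega)}$ forces $qpN$, and then $pN$, to be pure-projective.

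The remaining implication, $pN$ pure-projective $\Rightarrow N$ gradable, is the crux. As $p$ is a right adjoint it commutes with finite direct sums, so $pN\cong\bigoplus_a pN_a$ over the indecomposable summands of $N$; each $pN_a$ is then pure-projective, and direct sums of gradable modules are gradable, so we may assume $N$ indecomposable. The counit $\epsilon_N\colon qpN\to N$ is a pure epimorphism --- it is the cokernel of the injective endomorphism $1-\phi$ of $qpN$, where $\phi$ is induced by the degree shift, and this telescope sequence is pure exact --- hence splits because $N$ is finitely presented, so $N$ is a direct summand of $qpN$. Write $pN\cong\bigoplus_j Z_j$ with the $Z_j$ f.g.\ indecomposable graded; then $qpN\cong\bigoplus_j qZ_j$ is a direct sum of finite-length modules, so the indecomposable $N$ is a summand of some $qZ:=qZ_{j_0}$. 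Applying $p$ to the corresponding split monomorphism, $pN$ is a direct summand of $pqZ\cong\bigoplus_{m\in\mathbb{Z}}Z(m)$. Krull--Schmidt--Azumaya now applies in $\Gr R$: f.g.\ graded modules over a right artinian graded ring have finite length, so a nonzero such module $Z$ never satisfies $Z\cong Z(c)$ with $c\neq0$, and the $Z(m)$ are pairwise non-isomorphic with local endomorphism rings. Consequently the indecomposable summands of $pN$ lie among $\{Z(m)\}_{m\in\mathbb{Z}}$, each with multiplicity one, and the shift-invariance $pN\cong pN(1)$ forces $pN\cong\bigoplus_{m\in\mathbb{Z}}Z(m)\cong pqZ$. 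Finally, writing $qZ\cong N\oplus N'$ gives $pqZ\cong pN\oplus pN'$, so $pN\cong pN\oplus pN'$ with $pN'$ pure-projective; comparing Krull--Schmidt decompositions forces $pN'=0$, hence $N'=0$ and $qZ\cong N$. Thus $N$ is gradable.

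The main obstacle is exactly this last step: passing from ``$N$ is a direct summand of the gradable module $qpN$'' to ``$N$ is gradable''. A direct summand of a gradable module need not be gradable --- indeed $\epsilon_N$ always splits for finitely presented $N$, so \emph{every} such $N$ is a summand of a gradable module --- so one must exploit the rigidity coming from pure-projectivity of $pN$: it realizes $pN$ as a direct sum of finite-length objects, making Azumaya's theorem available, and this together with the shift-invariance of the pull-up forces $pN\cong pqZ$ for a single indecomposable graded $Z$ with $qZ\cong N$.
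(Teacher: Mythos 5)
The forward implications (gradable $\Rightarrow$ $pN$ pure-projective, gradable $\Rightarrow$ $qpN\cong N^{(\omega)}$, and $qpN\cong N^{(\omega)}\Rightarrow pN$ pure-projective via your reflection lemma for $q$) are fine and match the role of Lemma~\ref{lemma:pullupGraded}. But your proof of the crux implication, ``$pN$ pure-projective $\Rightarrow N$ gradable,'' breaks at its first substantive step: the claim that the counit $\delta_N\colon qpN\to N$ is a \emph{pure} epimorphism because it is the cokernel of $1-\phi$ and ``this telescope sequence is pure exact.'' The exact sequence $0\to qpN\xrightarrow{1-\phi} qpN\xrightarrow{\delta_N} N\to 0$ is correct, but the purity argument tacitly treats $qpN=\oplus_i Ne_i$ as a direct sum of $R$-modules presenting $N$ as a filtered colimit; in fact $Ne_i\cdot R_j\subseteq Ne_{i+j}$, so the $Ne_i$ are only $R_0$-submodules, the sequence is not the canonical colimit presentation of $N$ over $R$, and neither the ``colimit of split exact sequences'' argument nor tensoring coordinatewise applies. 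The claim is not merely unjustified but false: since $N$ is finitely presented, purity of $\delta_N$ would force $\delta_N$ to split, and by Lemma~\ref{lemma:counitSplits} (equivalently, Camillo--Fuller's result that summands of f.g.\ gradable modules are gradable) splitting of $\delta_N$ already implies $N$ is gradable. Non-gradable finitely presented modules exist --- e.g.\ over the Kronecker algebra graded with one arrow in degree $0$ and the other in degree $1$, the regular module $M_\lambda$, $\lambda\neq 0$, is ungradable, and a two-line computation with $s(u)=\sum\alpha_i ue_i$ shows $\delta_{M_\lambda}$ admits no section. So $\delta_N$ is not pure in general, and everything downstream (the split mono $N\to qZ_{j_0}$, hence the whole shift-invariance/Azumaya rigidity argument) has no starting point.

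This is exactly the point where the paper does genuine work: from pure-projectivity of $pN$ it extracts a f.g.\ graded direct summand $M$ of $pN$ with $\delta_N|_{qM}$ onto, and then Proposition~\ref{prop:NgeneratesM} uses Camillo--Fuller's structure theorem for $S=\Endo_R(qM)$ (a finitely graded local ring with $\rad S=\rad S_0\oplus\bigoplus_{k\neq 0}S_k$) to show that $f=\sum_{|k|\le d}f_k$, built from the splitting $i\colon M\to pN$ and the shift $\sigma_N$, is a unit, whence $f^{-1}\zeta^{-1}(g)$ splits $\delta_Nq(i)$ and $qM\in\add(N)$. Your proposal contains no substitute for this step; the later Krull--Schmidt/shift-invariance analysis of $pN$ inside $pqZ$ is a reasonable (and, granted the missing split monomorphism, essentially correct) alternative to the tail end of the paper's argument, but it cannot replace Proposition~\ref{prop:NgeneratesM}, which is what actually converts pure-projectivity of the pull-up into gradability of $N$.
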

 
 Next, in Section 4, we investigate some consequences of $G_R < \infty$ in order to prove Theorem~\ref{thm:FiniteG}.  In Section 5, we briefly consider pure semisimple rings.  Motivated by the pure-semisimplicity conjecture, we show that (F3) and (F4) are left-right symmetric.  Afterwards, in Section 6 we investigate connections to the second Brauer-Thrall conjecture for artin algebras using Pr\"ufer modules as indicated by Ringel \cite{Ringel2}.  Namely, we show that if a graded artin algebra has an ungradable module, then the pull-up of this module can be used to construct a Pr\"ufer module which is not of finite type.  This construction was one of the motivating problems for this work. Finally, we conclude with several open questions, some of which are inspired by results of Gordon and Green on the AR-quivers of graded artin algebras.

 \section{Background on graded modules}
 
  Let $R = \oplus_{n \in \mathbb{Z}} R_n$ be a graded ring.  Observe that $R_0$ is a unital subring of $R$, often called the initial subring of $R$.  A right $R$-module $M_R$ is $\mathbb{Z}$-graded if there exists a decomposition $M = \oplus_{n \in \mathbb{Z}} M_n$ (of abelian groups) such that $M_i R_j \subseteq M_{i+j}$ for all $i$ and $j$.  (As we only consider $\mathbb{Z}$-graded modules here, we will omit the $\mathbb{Z}$ and just write `graded modules'.)  An $R$-module map $f : M \rightarrow N$ between two graded $R$-modules is {\it homogeneous} (or {\it degree zero}) if $f(M_n) \subseteq N_n$ for all $n \in \mathbb{Z}$.  We write $\rMod R$ for the category of all right $R$-modules, and $\Gr R$ for the category of all graded right $R$-modules and degree zero morphisms.  The category $\Gr R$ is endowed with an automorphism $S$ defined by shifting the grading of a graded module $M$: namely, $SM = M = \oplus M_n$ but with $(SM)_n = M_{n+1}$ for all $n$, and $S(f) = f$ for all morphisms $f$.  We may also write $M[d]$ for $S^dM$.

  A graded module $M_R$ is {\it bounded below} (resp. {\it bounded above}) if $M_n = 0$ for $n \ll 0$ (resp. for $n \gg 0$).  Observe that if the grading on $R$ is bounded above (resp. below), then any f.g. graded $M_R$ is bounded above (resp. below).  If $M_R$ is both bounded above and below, we say $M$ is {\it bounded} (or {\it finitely graded}) and we define the {\it graded length} of $M \neq 0$ as $$\gl M = \max \{n\ |\ M_n \neq 0\} - \min \{n\ |\ M_n \neq 0\}+1.$$
Each homogeneous piece $M_i$ of a graded module $M_R$ is a right $R_0$-module, and we say that $M_R$ is {\it locally finite} if each $M_i$ has finite length over $R_0$.  Additionally, we will say that $M_R$ is {\it finite} if it is locally finite and bounded.  Notice that this is equivalent to $M$ having finite length as an $R_0$-module.  Additionally, using interval notation, we say that a graded module $M$ is concentrated in degrees $[a,b]$ if $M = \oplus_{n=a}^b M_n$. 
   
  We have an exact functor $q: \Gr R \rightarrow \rMod R$ which simply forgets about the grading of a graded module $M$.  We say that an $R$-module $N$ is {\it gradable} if it is isomorphic to $qM$ for some graded $M_R$, and we write $\G_R$ (resp. $\G^f_R$) for the full subcategory of gradable (resp. finitely generated, gradable) $R$-modules.

 In the literature on Galois coverings \cite{BG}, $q$ is called a {\it  push-down} functor, and it has a right adjoint $p$, called a {\it pull-up} functor.  We now describe $p: \rMod R \rightarrow \Gr R$ in more detail.  From an arbitrary $R$-module $M$, we can define a graded $R$-module $pM$ with $(pM)_n = M$ for all $n$.  We write $$pM = \oplus_{n \in \mathbb{Z}} Me_n,$$
 where the $e_n$ are formal symbols that we use to keep track of the different summands of $pM$ (for any $M$).  The $R$-action on $pM$ is given by $$(me_n)r = (mr)e_{n+j}, \ \ \forall\ r \in R_j$$ and extended linearly.  For a morphism $f : M \rightarrow N$, we define $pf : pM \rightarrow pN$ by $pf(me_i) = f(m)e_i$ for all $i$.  Clearly $pf$ is homogeneous morphism of graded $R$-modules.  It is also easy to see that $p$ is an exact functor.  We note that $pM$ is always $S$-invariant, meaning $pM \cong pM[1]$ via a natural map $\sigma_M : pM \cong pM[1]$ that sends $me_i \mapsto me_{i+1}$ for each $m \in M$ and $i \in \mathbb{Z}$.
 
 Furthermore, we have natural transformations $$\delta : qp \rightarrow 1_{\rMod R}\ \mbox{and}\ \epsilon : 1_{\Gr R} \rightarrow pq,$$ which are given by $\delta_N(\sum_i n_i e_i) = \sum_i n_i$ for any $R$-module $N$ with $n_i \in N$, and $\epsilon_M (m)  = me_i$ for any graded module $M_R$ and $m \in M_i$.  It is not difficult to check that $\delta$ and $\epsilon$ provide the co-unit and unit, respectively, for an adjunction $q \dashv p$.
 
 \begin{lemma} \begin{enumerate}
 \item The pull-up functor $p : \rMod R \rightarrow \Gr R$ is right adjoint to the push-down functor $q : \Gr R \rightarrow \rMod R$, i.e., for all $M_R$ graded and all $N_R$, we have natural isomorphisms $$\Hom_R(qM,N) \cong \Hom_{\Gr R}(M, pN).$$ 
 \item We also have natural isomorphisms 
 $$\Hom_R(N,qM) \cong \Hom_{\Gr R}(pN,M)$$
 whenever $M$ has finite graded length.  Thus, in a sense, $p$ is close to being left adjoint to $q$, but is not in general.

\end{enumerate}
 \end{lemma}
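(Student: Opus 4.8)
The plan is to exhibit explicit mutually inverse bijections in each part, leaving the routine checks of additivity, $R$-linearity and naturality to the reader; part (1) could equally well be obtained by verifying the triangle identities for the unit $\epsilon$ and counit $\delta$ recalled above.

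For part (1): to a map $f\in\Hom_R(qM,N)$ assign $\bar f\colon M\to pN$, defined on a homogeneous element $m\in M_n$ by $\bar f(m)=f(m)e_n$ and extended additively. Using $M_nR_j\subseteq M_{n+j}$ together with the rule $(xe_n)r=(xr)e_{n+j}$ for $r\in R_j$, one sees that $\bar f$ is a homogeneous $R$-linear map. In the other direction, send a homogeneous $g\colon M\to pN$ to the composite $\delta_N\circ qg\colon qM\to N$, where $\delta_N(\sum_i n_ie_i)=\sum_i n_i$. These assignments are mutually inverse, as one checks on a homogeneous $m\in M_n$: on one hand $\delta_N(\bar f(m))=\delta_N(f(m)e_n)=f(m)$; on the other, if $g(m)=n_me_n$ then $\delta_N\circ qg$ sends $m\mapsto n_m$, and applying the first assignment to it returns the map $m\mapsto n_me_n=g(m)$. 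Naturality in $M$ and $N$ is a direct diagram chase; equivalently, one verifies $\delta_{qM}\circ q(\epsilon_M)=1_{qM}$ and $p(\delta_N)\circ\epsilon_{pN}=1_{pN}$.

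For part (2): we may assume $M\neq 0$, say concentrated in degrees $[a,b]$. Given $g\colon N\to qM$, write $g(n)=\sum_{j=a}^{b}g(n)_j$ with $g(n)_j\in M_j$, and define $\hat g\colon pN\to M$ by $\hat g(ne_i)=g(n)_i$ (read as $0$ when $i\notin[a,b]$), extended additively; this is homogeneous by construction, and $R$-linear since for $r\in R_k$ the degree-$(i+k)$ component of $g(n)r$ is $g(n)_ir$. Conversely, from a homogeneous $h\colon pN\to M$ build $\check h\colon N\to qM$ by $\check h(n)=\sum_{i=a}^{b}h(ne_i)$: each $h(ne_i)$ lies in $M_i$, so this is a finite sum landing in $qM$, and the identity $(ne_{i-k})r=(nr)e_i$ for $r\in R_k$ lets one push the $R$-action through the sum to check that $\check h$ is $R$-linear. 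Comparing homogeneous components yields $\hat{\check h}=h$ and $\check{\hat g}=g$, and naturality (contravariant in $N$, covariant in $M$ over bounded modules) is routine.

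The only step with any content, and the sole use of the finite-graded-length hypothesis, is the finiteness of the sum $\sum_i h(ne_i)$ defining $\check h$; this genuinely fails for unbounded $M$, since for $M=pN$ and $h=1_{pN}$ the putative adjunct would have to be the nonexistent ``diagonal'' map $n\mapsto\sum_i ne_i\notin qpN$ --- which is exactly why $p$ is not a left adjoint of $q$ in general. Everything else is pure bookkeeping with the degree shifts coming from the $R$-action, so I anticipate no real obstacle.
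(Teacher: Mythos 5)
Your proposal is correct and follows essentially the same route as the paper: the map $f\mapsto(m\in M_n\mapsto f(m)e_n)$ with inverse $g\mapsto\delta_N\circ qg$ in part (1), and $g\mapsto(ne_i\mapsto\pi_i g(n))$ with inverse $h\mapsto(n\mapsto\sum_i h(ne_i))$ in part (2), using finite graded length only to make the latter sum finite. Your closing remark on the nonexistent diagonal map is a nice explicit justification of the paper's ``but is not in general'' aside, but otherwise the arguments coincide.
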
 
  
 \noindent
\begin{proof}   (1) Let $M =\oplus_n M_n$ be a graded $R$-module and consider a map $f: qM \rightarrow N$.  We define a homogeneous map $\eta f : M \rightarrow pN$ by $\eta f (m) = f(m)e_i$ for all $m \in M_i$, and extend $\eta(f)$ linearly to all of $M$.  If $g : M \rightarrow pN$ is a homogeneous map, the inverse of $\eta$ is given by $\eta^{-1}g = \delta_N \circ q(g) : qM \rightarrow qpN \rightarrow N$.
  
  (2) We start by defining a natural map $\zeta : \Hom_R(N,qM) \rightarrow \Hom_{\Gr R}(pN,M)$ for any $N_R$ and any graded $M_R$.  For a graded module $M$ let $\pi_i : M \rightarrow M_i$ denote the projection as a map of abelian groups.  If $f : N \rightarrow qM$, we define a homogeneous map $\zeta f : pN \rightarrow M$ by $\zeta f( n e_i) = \pi_i f(n)$ for each $i \in \mathbb{Z}$ and $n \in N$ and extending linearly.  To see that $\zeta f$ is $R$-linear, note that for $r \in R_j$ we have $$\zeta f( ne_i \cdot r) = \zeta f (nr e_{i+j}) = \pi_{i+j} f(nr) = \pi_{i+j} (f(n)r),$$
    and since $M$ is graded, $$\pi_{i+j}(f(n) r) = \pi_{i+j}(\pi_i(f(n)) r) = \pi_i(f(n)) r = \zeta f (ne_i) \cdot r.$$
 Now to define an inverse of $\zeta$, we must assume that $M$ has finite graded length.  In this case, for a homogeneous map $g: pN \rightarrow M$, we set $\zeta^{-1}g(n) = \sum_i g(n e_i)$, noting that the sum is finite because $M$ is nonzero in only finitely many degrees. 
 \end{proof}
  
  \begin{coro}\label{coro:counitSplits}
  \begin{enumerate}
  \item For any $N_R$, the co-unit $\delta_N : qp N \rightarrow N$ provides a right $\G_R$-approximation of $N$.  That is, any map $f : M \rightarrow N$ with $M$ gradable factors through $\delta_N$ via a map $M \rightarrow qp N$.
\item  If $N_R$ is gradable then the co-unit $\delta_N : qp N \rightarrow N$ splits.
\end{enumerate}
  \end{coro}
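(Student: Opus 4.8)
The plan is to obtain both parts as formal consequences of the adjunction $q \dashv p$ together with the explicit description of the adjunction isomorphism $\eta$ recorded in the proof of the preceding Lemma; concretely, we shall use that for a homogeneous map $g : M \to pN$ one has $\eta^{-1}(g) = \delta_N \circ q(g)$.

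For part (1), I would argue as follows. Suppose $f : M \to N$ with $M$ gradable, and fix an isomorphism $\varphi : qM' \xrightarrow{\sim} M$ with $M'$ a graded module. Transposing $f \circ \varphi : qM' \to N$ across the adjunction produces a homogeneous map $\tilde f := \eta(f \circ \varphi) : M' \to pN$, and by the formula above $f \circ \varphi = \eta^{-1}(\tilde f) = \delta_N \circ q(\tilde f)$. Hence $f = \delta_N \circ \bigl( q(\tilde f) \circ \varphi^{-1} \bigr)$, so that $g := q(\tilde f) \circ \varphi^{-1} : M \to qpN$ is the required factorization through $\delta_N$. Since $qpN$ is itself gradable, this exhibits $\delta_N$ as a right $\G_R$-approximation in the stated sense.

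For part (2), if $N$ is gradable then $N$ can itself play the role of the gradable source in part (1): applying (1) to $f = 1_N : N \to N$ yields a map $g : N \to qpN$ with $\delta_N \circ g = 1_N$, i.e. $\delta_N$ is a split epimorphism, as claimed.

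I do not expect any substantial obstacle: the argument is purely the adjunction and the counit triangle identity. The only point requiring a little care is in part (1), where ``gradable'' must be used through an honest isomorphism $\varphi : qM' \to M$ (rather than an identification), so that the factorizing map $g$ is built by conjugating $q(\tilde f)$ by $\varphi$.
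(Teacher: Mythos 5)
Your argument is correct and is essentially the paper's own proof: part (1) is the adjunction identity $f = \delta_N \circ q(\eta(f))$, and part (2) follows by applying (1) to $1_N$. The only difference is your explicit handling of the isomorphism $\varphi : qM' \to M$, which the paper suppresses by writing $M = qM'$; this is a harmless refinement, not a new idea.
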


  \noindent
\begin{proof}  (1) Let $f : M \rightarrow N$ be a map with $M = qM'$ for a graded module $M'$.  Then $\eta(f) : M' \rightarrow pN$ is a homogeneous map and $f = \delta_N q (\eta(f))$ by standard properties of adjoints.   Thus $f$ factors through $\delta_N$.
  
  (2) If $N_R$ is gradable, then (1) applies to the identity map $1_N$.
  \end{proof}

  The above corollary shows that if $N$ is gradable, then it is isomorphic to a direct summand of $qp N$.  In fact, we can say even more in this case.
  
  \begin{lemma}\label{lemma:pullupGraded}  \begin{enumerate}
  \item For any graded module $M_R$, $pq M \cong \oplus_{i \in \mathbb{Z}} M[i]$.
  \item Suppose that $N_R$ is gradable, with $N \cong qM$ for a graded module $M$.  Then $pN \cong pq M \cong \oplus_{i \in \mathbb{Z}} M[i]$.
Consequently $qp N \cong N^{(\omega)}$.  
\end{enumerate}
  \end{lemma}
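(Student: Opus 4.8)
The plan is to prove part (1) by writing down an explicit isomorphism in $\Gr R$ and then to derive part (2) by purely formal manipulations.

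For (1), the first observation I would make is that the push-down functor is insensitive to regrading: since $S$ leaves the underlying abelian group and the $R$-action of a graded module unchanged, $q(M[d]) = qM$ \emph{literally} for every $d \in \mathbb{Z}$, and hence $pq(M[d]) = pqM$. Consequently, for each $d$ the unit of the adjunction $q \dashv p$ supplies a homogeneous $R$-module map $\epsilon_{M[d]} \colon M[d] \to pq(M[d]) = pqM$, which on degree $n$ sends $m \in (M[d])_n = M_{n+d}$ to $m\,e_n \in (pqM)_n = qM$. Summing over $d \in \mathbb{Z}$ gives a homogeneous map $\Phi = \oplus_{d \in \mathbb{Z}} \epsilon_{M[d]} \colon \oplus_{d \in \mathbb{Z}} M[d] \to pqM$. (Equivalently, one can define $\Phi^{-1}$ by hand via $m\,e_n \mapsto m$, viewed as an element of the summand $M[k-n]$ for $m \in M_k$, and check $R$-linearity directly.)

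Next I would verify that $\Phi$ is an isomorphism in $\Gr R$. As a homogeneous map is invertible in $\Gr R$ precisely when it is bijective in each degree, it is enough to inspect degree $n$: the source is $\oplus_{d \in \mathbb{Z}} M_{n+d}$, the target is $(pqM)_n = qM = \oplus_{k \in \mathbb{Z}} M_k$, and $\Phi$ carries the $d$-th summand (which in degree $n$ is $M_{n+d}$) identically onto the $M_{n+d}$-component of $qM$; reindexing by $k = n+d$ shows this is a bijection. This establishes $pqM \cong \oplus_{i \in \mathbb{Z}} M[i]$.

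For (2), applying the functor $p$ to an isomorphism $N \cong qM$ gives $pN \cong pqM$, and combining with (1) yields $pN \cong \oplus_{i \in \mathbb{Z}} M[i]$. Applying $q$, which preserves direct sums and kills $S$, gives $qpN \cong \oplus_{i \in \mathbb{Z}} q(M[i]) = \oplus_{i \in \mathbb{Z}} qM \cong N^{(\omega)}$. There is no serious obstacle here: the only points requiring care are the index bookkeeping around the shift convention, and the remark that, because $q$ ignores regrading, the identifications $pq(M[d]) = pqM$ are genuine equalities rather than merely natural isomorphisms, which is what lets the direct sum defining $\Phi$ land in a single copy of $pqM$.
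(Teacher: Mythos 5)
Your proposal is correct and takes essentially the same approach as the paper: your componentwise maps $\epsilon_{M[d]}\colon M[d]\to pqM$ (sending $m\in M_j$ to $me_{j-d}$) are precisely the paper's maps $f_d$, and both arguments conclude by checking that the assembled map is bijective in each degree, with (2) obtained formally by applying $p$ and $q$.
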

  
  \noindent
  \begin{proof}  (1) For each $i \in \mathbb{Z}$, define a homogeneous map $f_i : M[i] \rightarrow pq M$ by $f_i(m) = m e_{j-i}$ for all $m \in M_j = (M[i])_{j-i}$.  In fact, we have $f_i = \sigma_{qM}^{-i} f_0[i]$ for each $i \in \mathbb{Z}$, where $f_0$ coincides with the splitting of $\delta_{qM}$.   Altogether the maps $f_i$ for $i \in \mathbb{Z}$ induce a map $f : \oplus_{i \in \mathbb{Z}} M[i] \rightarrow pq M$, which is easily seen to be an isomorphism in each degree, and thus an isomorphism.
  
  (2) is an immediate consequence of (1).
   \end{proof}
    
  \begin{coro}\label{coro:isomorphicGradable} If $M$ and $M'$ are strongly indecomposable graded modules, then $qM \cong qM'$ if and only if $M' \cong M[i]$ for some $i \in \mathbb{Z}$.
  \end{coro}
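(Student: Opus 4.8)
The plan is to deduce the corollary from the Krull--Remak--Schmidt--Azumaya theorem applied to a direct sum decomposition of the pull-up of the common underlying module. The ``if'' direction is immediate: after forgetting the grading, $q(M[i])$ has the same underlying $R$-module and the same module homomorphisms as $qM$, so $M' \cong M[i]$ yields $qM' \cong qM$ at once.

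For the converse, suppose $qM \cong qM'$ and set $N := qM$. Then $N$ is gradable, witnessed both by $M$ and (since $N \cong qM'$) by $M'$, so Lemma~\ref{lemma:pullupGraded}(2) applies to each presentation and produces isomorphisms of graded modules
$$\bigoplus_{i \in \mathbb{Z}} M[i] \;\cong\; pN \;\cong\; \bigoplus_{i \in \mathbb{Z}} M'[i].$$
Next I would note that every shift $M[i] = S^iM$ is again strongly indecomposable, because $S$ is an automorphism of $\Gr R$, so $\Endo_{\Gr R}(M[i]) \cong \Endo_{\Gr R}(M)$ is local; likewise for the $M'[i]$. Thus the displayed isomorphism exhibits one object of $\Gr R$ as a coproduct of objects with local endomorphism rings in two ways, and the Krull--Remak--Schmidt--Azumaya theorem furnishes a bijection between the two index sets under which corresponding summands are isomorphic. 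In particular the summand $M' = M'[0]$ on the right is isomorphic to $M[j]$ for some $j \in \mathbb{Z}$, which is the assertion.

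The one step that deserves care is the appeal to Azumaya's theorem for arbitrary (here, countably infinite) coproducts: one must observe that $\Gr R$ is a legitimate setting for it --- it is a Grothendieck category, indeed a module category in disguise --- and that the endomorphism rings of the shifted modules really are local, which is the point recorded above. Everything else is formal and rests only on Lemma~\ref{lemma:pullupGraded}.
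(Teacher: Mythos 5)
Your proposal is correct and follows essentially the same route as the paper: apply the pull-up to the isomorphism $qM \cong qM'$, use Lemma~\ref{lemma:pullupGraded} to identify both sides with $\bigoplus_{i\in\mathbb{Z}} M[i]$ and $\bigoplus_{i\in\mathbb{Z}} M'[i]$, and then invoke the local-endomorphism-ring (Azumaya/exchange) argument in $\Gr R$ to match $M'$ with some shift $M[j]$. The paper's version is just slightly more economical, using only that $M'$ has a local endomorphism ring to place it as a summand of a single $M[i]$, rather than the full uniqueness statement of Krull--Remak--Schmidt--Azumaya.
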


\begin{proof} Assume $M$ and $M'$ are stongly indecomposable graded modules.  If $qM \cong qM'$, then $\oplus_{i \in \mathbb{Z}} M[i] \cong pqM \cong pqM' \cong \oplus_{i \in \mathbb{Z}} M'[i]$.  Since $M'$ is strongly indecomposable, it must be isomorphic to a direct summand of some $M[i]$.  As $M[i]$ is indecomposable, we have $M' \cong M[i]$ in $\Gr R$.  The converse is clear. 
\end{proof}
  
  We are interested in the converse of Corollary~\ref{coro:counitSplits}(2).  Namely, if $\delta_N$ splits does it follow that $N$ is gradable?  Notice that it is not automatic that a direct summand of a gradable module is also gradable.  In particular, if $N_R$ is a non-gradable projective module, then $\delta_N$ splits since it is onto, and we obtain a counterexample.   To be more concrete, such non-gradable projective modules exist over polynomial rings of the form $R=D[x,y]$, with the usual grading, when $D$ is a noncommutative division ring \cite{SPPM}.  However, if $R$ is right artinian, then the converse does hold as we will see in the next section.  This fact is an easy consequence of results of Camillo and Fuller, the first of which we state now since it does not require any additional assumptions on the ring $R$.

\begin{therm}[Corollary 2, Theorem 1 in \cite{CF}]\label{thm:CF} Let $M_R$ be a finitely graded $R$-module with a.c.c. and d.c.c. on homogeneous submodules.  Then $M$ is indecomposable in $\Gr R$ if and only if $qM$ is indecomposable in $\rMod R$.  Moreover, in this case $S=\Endo_R(qM)$ is a finitely graded local ring with $\rad S = \rad S_0 \oplus \bigoplus_{n\neq 0} S_n$.
\end{therm}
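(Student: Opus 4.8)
The plan is to transfer the question to the graded structure that the grading on $M$ induces on the endomorphism ring $S:=\Endo_R(qM)$. Since $M$ is finitely graded, the adjunction isomorphism $\Hom_R(N,qM)\cong\Hom_{\Gr R}(pN,M)$ (valid because $M$ has finite graded length), applied with $N=qM$ and combined with Lemma~\ref{lemma:pullupGraded}(1), gives $S\cong\Hom_{\Gr R}(pqM,M)\cong\bigoplus_{i\in\mathbb Z}\Hom_{\Gr R}(M[i],M)$, a \emph{finite} sum because $M$ is bounded. Concretely this is just the decomposition of an $R$-linear endomorphism of $qM$ into homogeneous components, which exists precisely because $M$ is bounded: writing $S_n$ for the group of $R$-endomorphisms $\phi$ of $M$ with $\phi(M_k)\subseteq M_{k+n}$ for all $k$, one obtains a $\mathbb Z$-grading $S=\bigoplus_n S_n$ of rings, with $S_0=\Endo_{\Gr R}(M)$ and $S_n=0$ whenever $|n|\ge\gl M$. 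Thus $S$ is automatically finitely graded, and the whole theorem will follow once we show $S$ is local with $\rad S=\rad S_0\oplus\bigoplus_{n\neq0}S_n$.

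One implication of the equivalence is free: a decomposition $M=M'\oplus M''$ in $\Gr R$ with $M',M''\neq0$ forgets to $qM=qM'\oplus qM''$ with nonzero summands, so $qM$ indecomposable forces $M$ indecomposable (no hypotheses needed). For the converse assume $M$ is indecomposable in $\Gr R$. The hypothesis of a.c.c.\ and d.c.c.\ on homogeneous submodules says exactly that $M$ has finite length in the abelian category $\Gr R$; hence $S_0=\Endo_{\Gr R}(M)$ is the endomorphism ring of a finite-length object, so it is semiprimary (and in particular $\rad S_0$ is nilpotent), and it is local because $M$ is indecomposable. Put $\mathfrak m:=\rad S_0\oplus\bigoplus_{n\neq0}S_n$.

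The heart of the matter is that for $n\neq0$ the ideals $S_{-n}S_n$ and $S_nS_{-n}$ of $S_0$ are nil. Take $f\in S_{-n}S_n$ and write $f=\sum_j u_jv_j$ with $v_j$ homogeneous of degree $n_j$ and $u_j$ of degree $-n_j$, $n_j\neq0$; collecting the $v_j$ and $u_j$ exhibits $f$ as a composite $M\xrightarrow{\xi}P\xrightarrow{\sigma}M$ of degree-zero graded maps, where $P=\bigoplus_j M[n_j]$ is a finite direct sum of \emph{nonzero} shifts of $M$. By Fitting's Lemma in $\Gr R$ there is $k$ with $M=\ker(f^k)\oplus\im(f^k)$ and $f$ an automorphism on $M_1:=\im(f^k)$; then $\xi|_{M_1}$ is a split monomorphism, so $M_1$ is a direct summand of $P$. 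Since $M$ is indecomposable, $M_1$ is $0$ or $M$, and $M_1=M$ is impossible: then $M$ would be a summand of $\bigoplus_j M[n_j]$, forcing $M\cong M[n_j]$ for some $j$ by Krull--Schmidt for finite-length objects of $\Gr R$ --- absurd, since a nonzero finitely graded module has finite nonempty support and $n_j\neq0$. Hence $M_1=0$ and $f$ is nilpotent; so $I:=\sum_{n\neq0}S_{-n}S_n$ and $I':=\sum_{n\neq0}S_nS_{-n}$ are nil ideals of $S_0$ and lie in $\rad S_0$. It is then routine that $\mathfrak m$ is a two-sided ideal of $S$ --- the only products that could escape it are $S_mS_{-m}\subseteq I'\subseteq\rad S_0$ and $S_{-m}S_m\subseteq I\subseteq\rad S_0$ for $m\neq0$ --- and $S/\mathfrak m\cong S_0/\rad S_0$ is a division ring, whence $\rad S\subseteq\mathfrak m$. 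Conversely $\mathfrak m$ is nilpotent: if $(\rad S_0)^r=0$ and $t=\gl M$, then in any product $c_1\cdots c_N$ of homogeneous elements of $\mathfrak m$ with $N\ge(2t-1)r$, either some initial segment already lands in a zero component $S_d$ with $|d|\ge t$, or the partial degrees stay in $(-t,t)$ so that some value recurs $r$ times, splitting the product into at least $r$ consecutive blocks of total degree $0$; each such block lies in $\rad S_0$ (a lone degree-$0$ factor by definition, a longer degree-$0$ block by $I,I'\subseteq\rad S_0$), so the product lies in $(\rad S_0)^r=0$. Therefore $\rad S=\mathfrak m=\rad S_0\oplus\bigoplus_{n\neq0}S_n$, $S/\rad S$ is a division ring, $S=\Endo_R(qM)$ is local, and $qM$ is indecomposable; the displayed description of $\rad S$ is the ``moreover'' statement.

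The main obstacles are the two nontrivial steps inside the hard direction: the Fitting-and-Krull--Schmidt argument for the nilpotence of $S_{-n}S_n$ (whose crux is that a nonzero finitely graded module cannot be isomorphic to a proper shift of itself), and the bookkeeping proving $\mathfrak m$ nilpotent --- the latter is the step that most needs care, since $\mathfrak m$ is not presented as a sum of nilpotent ideals of $S$ itself.
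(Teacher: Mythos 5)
The paper does not actually prove this statement; it is imported verbatim from Camillo--Fuller \cite{CF} (their Corollary 2 and Theorem 1), so there is no in-paper argument to compare against. Your proof is a correct, self-contained derivation, and it follows the natural strategy: use boundedness of $M$ to grade $S=\Endo_R(qM)$ by $S_n=\{\phi\mid\phi(M_k)\subseteq M_{k+n}\}$ (this is exactly where ``finitely graded'' is used --- an endomorphism of an unbounded graded module need not be a finite sum of homogeneous components), then show $\mathfrak m=\rad S_0\oplus\bigoplus_{n\neq0}S_n$ is a nilpotent maximal ideal. The key steps all check out: Fitting's lemma in $\Gr R$ plus Krull--Schmidt (legitimate, since $\Endo_{\Gr R}(M[n])\cong S_0$ is local by the Fitting argument, with no circularity) reduces the nilpotence of elements of $S_{-n}S_n$ to the observation that a nonzero bounded graded module cannot be isomorphic to a nonzero shift of itself; and the pigeonhole argument on partial degrees correctly converts $(\rad S_0)^r=0$ into $\mathfrak m^{N}=0$. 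Two spots are compressed and worth a sentence each in a final write-up. First, the claim that a degree-zero block of length $\geq 2$ lies in $\rad S_0$ is not literally ``by $I,I'\subseteq\rad S_0$'': you should peel off the first factor $c$ of the block --- if $\deg c=0$ then $c\in\rad S_0$ and the remainder lies in $S_0$, so the block is in $\rad S_0\cdot S_0$; if $\deg c=m\neq0$ the remainder lies in $S_{-m}$ and the block is in $S_mS_{-m}\subseteq I'$. Second, the nilpotence of $\rad S_0$ (semiprimaryness of the endomorphism ring of a finite-length indecomposable object) deserves a citation or the one-line Harada--Sai justification, since Fitting only gives that each element of $\rad S_0$ is nilpotent. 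There is also a harmless off-by-one in the pigeonhole (you need $r+1$ occurrences of a partial degree, i.e.\ $N+1>(2t-1)r$ values $d_0,\dots,d_N$, to extract $r$ degree-zero blocks). None of these affects the validity of the argument.
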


We close this section with an important result concerning finitely graded direct summands of pull-up modules.

\begin{propos}\label{prop:NgeneratesM}
Let $N_R$ be an $R$-module and $M_R$ a locally finite graded $R$-module of finite graded length.  If there exists a split epimorphism $g : pN \rightarrow M$ in $\Gr R$, then $qM$ belongs to $\add(N)$.
\end{propos}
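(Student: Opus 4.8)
The plan is to split $qM$ directly off of $N$, by transporting $g$ and a chosen section $s\colon M\to pN$ of it (so $gs=1_M$) across the two hom-isomorphisms established above. First I would reduce to the case in which $M$ is indecomposable in $\Gr R$: since $M$ is locally finite and bounded it has finite length over $R_0$, hence decomposes as a finite direct sum of graded-indecomposable modules, and pre- and post-composing $g$ and $s$ with the inclusions and projections of such a decomposition exhibits each indecomposable summand as a split quotient of $pN$ in $\Gr R$ of finite graded length; since $qM$ is the corresponding direct sum of the $q$'s of these summands and $\add(N)$ is closed under finite direct sums, it suffices to treat one indecomposable $M$. In that case $M$ is finitely graded with a.c.c. and d.c.c. on homogeneous submodules, so Theorem~\ref{thm:CF} applies: $qM$ is indecomposable and $S:=\Endo_R(qM)$ is a finitely graded local ring with $\rad S=\rad S_0\oplus\bigoplus_{n\neq0}S_n$, where $S_n=\{\varphi\in S\mid \varphi(M_i)\subseteq M_{i+n}\ \text{for all }i\}$.

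Now let $\tilde s:=\delta_N\circ q(s)\colon qM\to N$ be the map corresponding to $s$ under $\Hom_R(qM,N)\cong\Hom_{\Gr R}(M,pN)$, and let $\tilde g\colon N\to qM$, $\tilde g(n)=\sum_i g(ne_i)$, be the map corresponding to $g$ under $\Hom_R(N,qM)\cong\Hom_{\Gr R}(pN,M)$ (this second isomorphism is where $\gl(M)<\infty$ enters). Write $\sigma:=q(\sigma_N)$ for the automorphism of $qpN$ coming from $\sigma_N\colon pN\cong pN[1]$, so $\sigma(ne_i)=ne_{i+1}$. The heart of the proof is the identity
$$\tilde g\circ\tilde s=\sum_{t\in\mathbb Z}q(g)\circ\sigma^{t}\circ q(s),$$
which one checks by noting that for homogeneous $\mu\in M_i$ we have $\tilde s(\mu)=\nu$ where $s(\mu)=\nu e_i\in(pN)_i$, hence $\tilde g(\tilde s(\mu))=\sum_k g(\nu e_k)=\sum_k g\bigl(\sigma^{k-i}(s(\mu))\bigr)$. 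The sum is finite because $M$ is bounded, and since $q(g)$, $\sigma$, $q(s)$ are morphisms of ungraded modules while $q(g)\sigma^{t}q(s)$ carries $M_i$ into $M_{i+t}$, the $t$-th summand lies in $S_t$; the $t=0$ summand is $q(g)q(s)=q(gs)=1_{qM}$. Therefore $\tilde g\circ\tilde s=1_{qM}+E$ with $E=\sum_{t\neq0}q(g)\sigma^{t}q(s)\in\bigoplus_{n\neq0}S_n\subseteq\rad S$, so $\tilde g\circ\tilde s$ is a unit of $S$. Consequently $\tilde s$ is a split monomorphism, with retraction $(\tilde g\circ\tilde s)^{-1}\circ\tilde g$, so $qM$ is isomorphic to a direct summand of $N$; in particular $qM\in\add(N)$.

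The one step that needs genuine care is verifying the displayed identity together with the bookkeeping that $q(g)\sigma^{t}q(s)\in S_t$: this is a routine unwinding of the formulas for $\delta_N$, $\sigma_N$ and the adjunction isomorphisms, but it must be carried out paying attention to homogeneity. The conceptual content is that Theorem~\ref{thm:CF} is exactly what forces the ``nonzero-degree part'' $E$ of $\tilde g\circ\tilde s$ into $\rad S$, and this is also why the reduction to indecomposable $M$ cannot be skipped: already for $M=T\oplus T[1]$ with $T$ a graded simple concentrated in a single degree, $\Endo_R(qM)$ is a $2\times2$ matrix ring over a division ring whose diagonal is in degree $0$ and whose two off-diagonals sit in degrees $\pm1$ but are not in the radical, so $1+E$ need not be invertible there.
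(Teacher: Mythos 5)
Your proof is correct and follows essentially the same route as the paper's: reduce to $M$ indecomposable, invoke Theorem~\ref{thm:CF} to get that $S=\Endo_R(qM)$ is local with the nonzero-degree components in $\rad S$, and show that the composite of the two adjunction-transported maps equals $1_{qM}$ plus terms of nonzero degree, hence is a unit, so $qM$ splits off $N$. Your identity $\tilde g\circ\tilde s=\sum_t q(g)\sigma^t q(s)$ is exactly the paper's elementwise computation $h(n)=\sum_k f_k(m)=f(m)$ with $f_k=g[k]\circ\sigma_N^k\circ i$, so the two arguments coincide up to presentation.
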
 

\begin{proof}  We may assume that $M$ is indecomposable in $\Gr R$.  Note that the assumptions on $M$ imply that $M$ has finite length over $R_0$, and hence the hypotheses of the above theorem are satisfied.  Since $M$ has finite graded length, we may assume that $M$ is concentrated in degrees $[0,d]$ for some $d \geq 0$.  The full endomorphism ring $S=\Endo_R(qM)$ is then graded with $S_j \cong \Hom_{\Gr R}(M,M[j])$ and $S_i = 0$ whenever $|i|>d$.  By the above theorem, it is local and $\rad S = \rad S_0 \oplus \bigoplus_{j\neq 0} S_j$. 

Let $i : M \to pN$ be a splitting for $g$ in $\Gr R$.  For each $k \in \mathbb{Z}$, we define a degree $k$-endomorphism of $M$ by
$$f_k = g[k]\circ \sigma_N^k \circ i,$$ where $\sigma_N : pN \to pN[1]$ is the isomorphsim sending $ne_j$ to $ne_{j+1}$ for all $n\in N$ and all $j \in \mathbb{Z}$.  Clearly, $f_0 = gi = 1_M$, and $f_k \in S_k \subseteq \rad S$ for all $k\neq 0$.  In particular, $f = \sum_{|k| \leq d} f_k$ is a unit in $S$.

Now let $m \in M_j$ be a homogeneous element of $M$ and let $i(m) = ne_j$ for some $n \in N$ and $0\leq j \leq d$.  Let $h = \zeta^{-1}(g): N \to qM$, so that $h(n) = \sum_{k=0}^d g(ne_k)$.  Observe that $g(ne_k) = g[k-j]\sigma_N^{k-j}(ne_j) = f_{k-j}(m)$ for each $k$.  Thus
$$h(n) = \sum_{k=0}^d g(ne_k) = \sum_{k=-j}^{d-j}f_k(m) = \sum_{|k|\leq d} f_k(m) = f(m).$$
Thus $m = f^{-1}h(n)= f^{-1}h\delta_N i(m)$, which shows that $f^{-1}h$ splits the map $\delta_N q(i) : qM \to N$.  Hence $qM$ is isomorphic to a direct summand of $N$. 
\end{proof}

\section{Graded modules over artinian rings}

From now on we assume $R = \oplus_n R_n$ is a right artinian graded ring (unless otherwise noted).  To be clear, throughout this article this will mean that $R$ is a right artinian ring which also happens to be graded (we never work under the weaker assumption that $R$ is graded and satisfies DCC on homogeneous right ideals).  In \cite{CF}, Camillo and Fuller show that a graded ring $R$ is right artinian if and only if its initial subring $R_0$ is right artinian and $R$ is finitely generated as a right $R_0$-module.  In particular, when $R$ is right artinian, we know that $R_n =0$ for $|n| \gg 0$.  We write $J = \rad R$ for the Jacobson radical of $R$, which is a homogeneous ideal by a well-known result of Bergman \cite{B}.   If $R$ is basic, $R/J$ is a direct product of division rings, which must be trivially graded by Proposition 4 of \cite{CF}.  Thus, if $R$ is basic or if $R$ is positively graded (i.e., $R_n = 0$ for all $n<0$), we have $J = \rad R_0 \oplus \bigoplus_{n \neq 0} R_n$.  In either case, it follows that all simple $R$-modules are gradable, and each graded simple module is concentrated in a single degree (see also Corollaries 4.4 and 4.5 in \cite{GoGr1}).  For $R$ right artinian any f.g. graded $R$-module is bounded and locally finite.  Camillo and Fuller \cite{CF} also show that a f.g. graded $R$-module is (strongly) indecomposable in $\Gr R$ if and only if it is (strongly) indecomposable in $\rMod R$ (Cor. 6), and that any f.g. $R$-module that is either semisimple, projective, injective or a direct summand of a f.g. gradable module is gradable (Prop. 7). 

For the reader's convenience, we include a simple argument that $(F2) \Rightarrow (F3)$ for right artinian graded rings.  Recall that $G_R$ is defined to be the supremum of the graded lengths of the finitely generated indcomposable graded $R$-modules.

\begin{lemma}\label{lemma:F2F3}
Let $R$ be a right artinian graded ring with only finitely many indecomposable gradable modules up to isomorphism.  Then $G_R < \infty$.
\end{lemma}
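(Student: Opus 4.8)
The plan is to bound $G_R$ by the largest graded length appearing in a fixed finite list of graded ``lifts'' of the indecomposable gradable modules.  First I would enumerate the indecomposable gradable $R$-modules up to isomorphism as $N_1, \dots, N_r$, and for each $i$ choose a graded module $M_i$ with $qM_i \cong N_i$.  A preliminary observation is that each $M_i$ may be taken finitely generated and indecomposable in $\Gr R$.  Since $q$ merely forgets the grading, $M_i$ is finitely generated as soon as $qM_i$ is (the homogeneous components of a finite generating set of $qM_i$ generate $M_i$ as a graded module); and writing the finitely generated graded module $M_i$ as a finite direct sum of indecomposable graded modules and applying $q$ exhibits $N_i \cong qM_i$ as a finite direct sum of indecomposable $R$-modules, using that $q$ preserves indecomposability of finitely generated graded modules (Cor.~6 of \cite{CF}); since $N_i$ is indecomposable, this forces $M_i$ to be indecomposable.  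Set $d = \max_{1 \le i \le r} \gl(M_i)$, which is finite.

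Next I would take an arbitrary finitely generated indecomposable graded module $M$ and argue that $\gl(M) \le d$.  By the theorem of Camillo and Fuller recalled above (Cor.~6 of \cite{CF}), $qM$ is indecomposable in $\rMod R$, so $qM \cong N_i \cong qM_i$ for some $i$.  Since $R$ is right artinian, the finitely generated graded modules $M$ and $M_i$ have finite length, hence each is strongly indecomposable (by Fitting's lemma, or again by the Camillo--Fuller result, which identifies their full endomorphism rings as local).  Corollary~\ref{coro:isomorphicGradable} then yields $M \cong M_i[j]$ in $\Gr R$ for some $j \in \mathbb{Z}$, and since shifting preserves graded length, $\gl(M) = \gl(M_i) \le d$.

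Taking the supremum over all finitely generated indecomposable graded modules $M$ gives $G_R \le d < \infty$, as claimed.  I do not expect a genuine obstacle here: the argument is a direct combination of the facts that $q$ preserves and reflects indecomposability of finitely generated graded modules and that, up to shift, a strongly indecomposable graded module is determined by its image under $q$.  The only point calling for a little care is the descent step in the first paragraph, where one must ensure the chosen lifts $M_i$ are finitely generated and indecomposable rather than arbitrary graded preimages of the $N_i$.
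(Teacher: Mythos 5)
Your proof is correct and follows essentially the same route as the paper's: lift the finitely many indecomposable gradable modules to finitely generated indecomposable graded modules, use the Camillo--Fuller result that $q$ preserves indecomposability to match an arbitrary f.g. indecomposable graded $M$ with one of the lifts, and invoke Corollary~\ref{coro:isomorphicGradable} to conclude $M$ is a shift of that lift, hence has the same graded length. The only difference is that you spell out the justification for choosing the lifts finitely generated and indecomposable and for the strong indecomposability hypothesis, which the paper leaves implicit.
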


\begin{proof} Suppose $qM_1, \ldots, qM_n$ are all the f.g. indecomposable gradable $R$-modules up to isomorphism, where $M_1, \ldots, M_n$ are f.g. graded $R$-modules.  Clearly, each $M_i$ is indecomposable in $\gr R$.  If $M$ is another indecomposable graded $R$-module, then $qM$ is indecomposable by \cite{CF} and thus $qM \cong qM_i$ for some $i$.  Now Corollary~\ref{coro:isomorphicGradable} implies that $M \cong M_i[j]$ for some $j \in \mathbb{Z}$. Thus $\gl M = \gl M_i \leq \max \{\gl M_i\}_{1\leq i\leq n}$.
\end{proof}

As another consequence of Camillo's and Fuller's result, we obtain a partial converse of Corollary~\ref{coro:counitSplits}(2).

\begin{lemma}\label{lemma:counitSplits}
Assume $R$ is a right artinian graded ring, and let $N_R$ be a finitely generated $R$-module.  If $\delta_N : qpN \rightarrow N$ splits, then $N$ is gradable.
\end{lemma}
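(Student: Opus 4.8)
The plan is to leverage Lemma~\ref{lemma:pullupGraded} together with Camillo and Fuller's decomposition of f.g. graded modules into indecomposables, and then apply Proposition~\ref{prop:NgeneratesM}. Since $\delta_N : qpN \rightarrow N$ splits, $N$ is isomorphic to a direct summand of $qpN$. My first step is to understand $pN$ well enough to produce a \emph{finitely graded} direct summand $M$ of $pN$ with $qM \cong N$ (or at least $qM \in \add N$). Because $R$ is right artinian, $R_n = 0$ for $|n| \gg 0$, and $N$ is a f.g. module of finite length over $R_0$ once we restrict scalars; the graded module $pN$ has $(pN)_n = N$ in every degree, so it is \emph{not} finitely graded, but it is locally finite (each homogeneous piece has finite length over $R_0$). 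The key point is that since $\delta_N$ splits, there is a homogeneous split monomorphism $i : M \hookrightarrow pN$ where $M$ is a graded lift of $N$ — indeed, $\eta(s)$ for a splitting $s : N \to qpN$ gives, after reinterpreting via the adjunction, a graded module $M$ with $qM \cong N$ and a split inclusion into $pN$. Wait — more carefully: $s : N \to qpN$ being a section of $\delta_N$ means $q(\eta(1_N))$-type reasoning doesn't directly apply; instead I should argue that the image of a section of $\delta_N$, suitably analyzed, yields a graded summand.

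Here is the cleaner route I would actually take. Since $N$ is f.g., write $N = N_1 \oplus \cdots \oplus N_t$ with each $N_j$ indecomposable; it suffices to treat each $N_j$, so assume $N$ is indecomposable (the splitting of $\delta_N$ restricts appropriately, or one re-derives it). Now $\delta_N$ splitting gives $N \mid qpN$ in $\rmod R$. The module $pN$ is a (possibly infinite) graded module, but I claim $N$ being a summand of $qpN$ forces $N$ to be a summand of $qM$ for some \emph{finitely graded} direct summand $M$ of $pN$: this is where artinian-ness does the work. Indeed, decompose $pN$ in $\Gr R$; because $pN$ is locally finite and $S$-invariant (so $pN \cong pN[1]$), by Lemma~\ref{lemma:pullupGraded}(1) applied in reverse — if $pN$ had \emph{any} finitely graded indecomposable summand $M$, then $pN$ would contain the whole orbit $\bigoplus_i M[i]$ as a summand — and a Krull–Schmidt / local-finiteness argument over the artinian ring shows such a finitely graded summand $M$ exists with $N$ a summand of $qM$. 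The precise mechanism: take a section $j : N \to qpN$, so its image is a f.g. $R$-submodule which is a summand; lift the idempotent. The image, being finitely generated, "lives" in $\bigoplus_{|n| \le c} Ne_n$ modulo the $R$-action which spreads across degrees only boundedly since $R_n = 0$ for $|n| > d$ — so one can cut $pN$ down to a finitely graded graded submodule-summand.

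Once I have a split epimorphism $pN \twoheadrightarrow M$ in $\Gr R$ with $M$ finitely graded and locally finite, Proposition~\ref{prop:NgeneratesM} immediately gives $qM \in \add(N)$. Combined with $N \mid qpN$ and $qpN \cong q\bigl(\bigoplus_i M[i]\bigr) = \bigoplus_i (qM)^{?}$... — rather, I track it as: $N$ is a summand of $qM$ and $M$ is graded, hence $N$ is gradable by Camillo and Fuller's result that a direct summand of a f.g. gradable module is gradable (Prop. 7 of \cite{CF}, cited at the start of this section).

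The main obstacle, I expect, is the passage from the infinitely-generated $pN$ to a \emph{finitely graded} summand $M$ surjected upon by $pN$ — i.e., extracting a finite-dimensional graded "approximation" of $N$ inside $pN$ and knowing the relevant map is a split epi in $\Gr R$. The artinian hypothesis is exactly what should make this work: $R_n = 0$ for $|n|$ large bounds how far the $R$-action propagates across the $e_n$-summands, so a finitely generated submodule of $pN$ is automatically concentrated in finitely many degrees, and idempotent-lifting in the (semiperfect) endomorphism context produces the splitting. Verifying that this truncation is compatible with the grading and that Krull–Schmidt applies in $\Gr R$ for the relevant (finitely graded, locally finite) modules will be the technical heart; everything after that is a direct appeal to Proposition~\ref{prop:NgeneratesM}.
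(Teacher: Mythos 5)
There is a genuine gap at the step you yourself flag as the ``technical heart'': you need a \emph{split epimorphism} $pN \twoheadrightarrow M$ in $\Gr R$ with $M$ finitely graded in order to invoke Proposition~\ref{prop:NgeneratesM}, but the hypothesis only gives a splitting of $\delta_N$ in $\rMod R$. The idempotent $j\circ\delta_N$ on $qpN$ coming from a section $j$ of $\delta_N$ is not homogeneous, and your proposed mechanisms for converting it into a graded splitting do not go through as stated: $pN$ is infinitely generated, so its endomorphism ring is not semiperfect and there is no ready-made idempotent-lifting or Krull--Schmidt decomposition of $pN$ in $\Gr R$ to appeal to. In fact, producing a finitely generated graded direct summand of $pN$ is essentially the content of the pure-projectivity hypothesis in Theorem~\ref{thm:GradablePP}; deducing it directly from the ungraded splitting of $\delta_N$ is not easier than the lemma you are trying to prove, so as written the argument either circles back on itself or rests on an unproved claim (e.g.\ the assertion that a finitely graded indecomposable summand would drag in its whole shift-orbit is likewise not established anywhere).

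The fix is that no graded splitting is needed at all, and it is already half-visible in your own sketch: since $N$ is finitely generated, the image of the section $j:N\to qpN$ lies in $q M$ for some \emph{finitely generated graded submodule} $M\subseteq pN$ (a finite generating set of $\im j$ involves only finitely many homogeneous components, and these generate such an $M$ --- no summand property is required). Then $\delta_N|_{qM}\circ j = 1_N$, so $N$ is a direct summand of the f.g.\ gradable module $qM$, and Camillo--Fuller's result (Prop.~7 of \cite{CF}, quoted at the start of Section 3) that direct summands of f.g.\ gradable modules are gradable finishes the proof. This is the paper's argument; Proposition~\ref{prop:NgeneratesM} plays no role here and is reserved for Theorem~\ref{thm:GradablePP}, where pure-projectivity genuinely supplies the graded summand your proposal presupposes.
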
  

\begin{proof}  Let $i : N \rightarrow qpN$ be a splitting for $\delta_N$. Since $N$ is f.g., the image of $i$ is contained in a finitely generated gradable submodule $qM$ of $qpN$, where $M \subseteq pN$ is a graded submodule.  Clearly $i$ still splits $\delta_N |_{qM}$, and hence $N$ is isomorphic to a direct summand of the f.g. gradable module $qM$.   Thus $N$ is gradable.
\end{proof}

 We conclude this section with our key characterization of gradable $R$-modules.  Recall that a module is {\it pure-projective} if it is isomorphic to a direct summand of a direct sum of finitely presented modules.  If $R$ is right artinian, then the Krull-Schmidt theorem implies that this is equivalent to the module being a direct sum of fintely presented (or finite length) submodules \cite{War}. 
We will review other characterizations later on.

\begin{therm}\label{thm:GradablePP} Let $R$ be a right artinian graded ring. A f.g. right $R$-module $N$ is gradable if and only if $pN$ is pure-projective.
\end{therm}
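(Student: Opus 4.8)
The plan is to prove the two implications separately. The forward implication will be a quick consequence of Lemma~\ref{lemma:pullupGraded}, and the converse is the substantive direction, where the essential input is Proposition~\ref{prop:NgeneratesM} together with the Krull--Schmidt theorem; the one genuine subtlety there is that Proposition~\ref{prop:NgeneratesM} only provides membership in $\add(N)$, so to upgrade this to an isomorphism I first reduce to the case in which $N$ is indecomposable.

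For the forward direction, suppose $N$ is gradable, say $N\cong qM$ for a graded module $M$. Since $N$ is finitely generated, we may take $M$ finitely generated as well (generate it by the finitely many homogeneous components of a finite generating set of $qM$); as $R$ is right artinian, hence right noetherian, $M$ is then finitely presented in $\Gr R$, and so is every shift $M[i]$. By Lemma~\ref{lemma:pullupGraded}(2) we have $pN\cong pqM\cong\bigoplus_{i\in\mathbb{Z}}M[i]$, a direct sum of finitely presented graded modules, so $pN$ is pure-projective.

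For the converse, suppose $pN$ is pure-projective. First reduce to $N$ indecomposable: since $N$ has finite length, write $N=\bigoplus_{i=1}^s N_i$ with each $N_i$ indecomposable; then $pN\cong\bigoplus_{i=1}^s pN_i$ because $p$ is additive, and each $pN_i$, being a direct summand of the pure-projective $pN$, is again pure-projective. So it suffices to treat a single indecomposable summand; assume $N\neq 0$ is indecomposable. Over the right artinian ring $R$, a finitely presented graded module is of finite length over $R_0$ and hence decomposes, uniquely, into finitely many indecomposable graded summands with local endomorphism rings; by the Krull--Remak--Schmidt--Azumaya theorem (the argument of \cite{War} applies verbatim in the Grothendieck category $\Gr R$), the pure-projective module $pN$ is therefore a direct sum $pN\cong\bigoplus_{\lambda\in\Lambda}M_\lambda$ of finitely generated indecomposable graded modules, with $\Lambda\neq\emptyset$. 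Fix any $\lambda$. Then $M_\lambda$ is a direct summand of $pN$, so there is a split epimorphism $pN\to M_\lambda$ in $\Gr R$, and since $R$ is right artinian, $M_\lambda$ is bounded and locally finite; thus Proposition~\ref{prop:NgeneratesM} yields $qM_\lambda\in\add(N)$. On the other hand, $M_\lambda$ is a finitely graded module with a.c.c. and d.c.c. on homogeneous submodules, so Theorem~\ref{thm:CF} shows $qM_\lambda$ is a nonzero indecomposable $R$-module. Since $N$ is indecomposable and $R$ is right artinian, the Krull--Schmidt theorem implies that the only nonzero indecomposable object of $\add(N)$, up to isomorphism, is $N$ itself; hence $qM_\lambda\cong N$, and $N$ is gradable. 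This completes the reduction and the proof.
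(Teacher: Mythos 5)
Your proof is correct and follows essentially the same route as the paper: the forward direction via Lemma~\ref{lemma:pullupGraded}, and the converse by reducing to $N$ indecomposable, extracting a finitely generated graded direct summand of $pN$, and applying Proposition~\ref{prop:NgeneratesM}. The only (harmless) variation is at the very end: the paper chooses a graded summand $M$ through which $\delta_N$ remains surjective and deduces that $N$ is a direct summand of $qM$, whereas you take an arbitrary indecomposable graded summand $M_\lambda$ and invoke Theorem~\ref{thm:CF} to see that $qM_\lambda$ is a nonzero indecomposable object of $\add(N)$, hence isomorphic to $N$.
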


\begin{proof}
By Lemma~\ref{lemma:pullupGraded}, it suffices to show the if statement.  Clearly, we may assume that $N$ is indecomposable.  If $pN$ is pure projective, there is a finitely generated, graded direct summand $M$ of $pN$ such that $\delta_N |_{qM} : qM \rightarrow N$ is onto.  Since $M$ is a direct summand of $pN$ and $M$ has finite graded length, by Proposition~\ref{prop:NgeneratesM} we know that $qM \in \add(N)$.  Then $N$ must be a direct summand of the gradable module $qM$, and hence $N$ is gradable.

\end{proof}

\begin{remark} \emph{We wonder to what extent this result might also be true over more general graded rings.  Notice that since the class of pure projective modules is closed under direct summands, a necessary condition for $pN$ pure projective to imply that $N$ is gradable is that direct summands of gradable modules are gradable.}\end{remark}

\section{A question of Gordon and Green}  

In this section we investigate the consequences of the assumption that there is a bound on the graded lengths of the indecomposable graded $R$-modules for a graded right artinian ring $R$.  Following Gordon and Green, we set $$G_R = \sup\ \{\gl M\ |\ M_R \in \gr R\ \mbox{indecomposable}\}.$$  In the introduction of \cite{GoGr2}, Gordon and Green speculate that every f.g. $R$-module should be gradable if $G_R$ is finite.  Indeed, this result will follow from the next proposition along with our above characterization of gradable modules.

\begin{propos}\label{prop:PureProj}  Assume that $G = G_R < \infty$.  Then any locally finite graded right $R$-module $X$ is pure-projective.
\end{propos}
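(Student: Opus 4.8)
The plan is to show that a locally finite graded module $X$ decomposes as a (possibly infinite) direct sum of finitely generated graded submodules, each of which is finitely presented — so that $X$ is pure-projective in $\Gr R$, hence $qX$ is pure-projective in $\rMod R$. Since $G_R < \infty$, every finitely generated indecomposable graded summand of $X$ has graded length at most $G$; the goal is to peel off such summands exhaustively. First I would reduce to the case where $X$ is bounded below: a locally finite graded module is the directed union of its truncations, but more usefully, one can organize the argument degree by degree. Pick the smallest degree $a$ in which $X$ is nonzero (if no such degree exists, replace $X$ by one of its bounded-below homogeneous subquotients — or argue symmetrically from above using that $R_n = 0$ for $n \gg 0$). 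Because $X$ is locally finite and $R_n = 0$ for $|n| \gg 0$, the homogeneous submodule of $X$ generated by $X_a$ is finite (finite length over $R_0$), hence splits into finitely generated indecomposable graded summands, each indecomposable in $\Gr R$ by Camillo–Fuller, and each of graded length $\le G$.

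The key step is then to split such a finite submodule $Y$ off of $X$ as a direct summand. This is where the bound $G$ does real work: one wants to show that a finite graded submodule that is "saturated" in an appropriate sense — e.g., containing everything in $X$ that it generates together with all of $X_a, X_{a+1}, \dots$ up to some controlled degree — is actually a direct summand. I would try to build the splitting inductively in degrees, extending a partial homogeneous retraction $X \to Y$ one homogeneous component at a time; the finiteness of each $X_n$ over $R_0$ and of the graded lengths keeps the process finite and coherent. Concretely: having split off $Y_1, \dots, Y_k$ whose sum $Z_k$ agrees with $X$ in all degrees $< a + kG$ (say), one looks at the complement and its bottom nonzero degree, generates a finite submodule there, and iterates. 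The union of the $Y_i$ over all stages exhausts $X$ in every degree, giving $X \cong \bigoplus_i Y_i$.

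The main obstacle I anticipate is the actual splitting-off argument — showing that a suitably chosen finite graded submodule $Y$ of $X$ is a direct summand in $\Gr R$. Plain finite length over $R_0$ does not make $Y$ injective or projective, so one cannot split for free; one must exploit that $Y$ has graded length $\le G$ and that no indecomposable graded module "straddles" more than $G$ degrees, so a homogeneous map out of $Y$ cannot be obstructed by pieces of $X$ lying too far away. I would look for an extension-vanishing statement: $\Ext^1_{\Gr R}(W, Y) = 0$ for $W$ the relevant quotient $X/Y'$, proved using that any such extension, if non-split, would produce an indecomposable graded module of graded length exceeding $G$ — contradicting the definition of $G_R$. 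If a clean $\Ext$ argument is elusive, the fallback is the explicit degreewise construction of a retraction sketched above, using induction on degrees together with the vanishing of $R_n$ for large $|n|$ to guarantee the construction terminates and is well-defined. Once $X$ is written as a direct sum of finite (hence finitely presented) graded modules, applying $q$ and invoking that pure-projectivity passes through the forgetful functor finishes the proof.
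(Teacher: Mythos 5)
Your plan has two genuine gaps. First, the reduction to a module with a smallest nonzero degree does not work: a locally finite graded module need not be bounded below or above, and the crucial case for the paper (the pull-up $pN$ of a finitely generated module, which is nonzero in every degree) is unbounded in both directions. Your peeling induction has no degree to start from there, and the suggestion to ``replace $X$ by a bounded-below homogeneous subquotient'' proves nothing about $X$ itself. Second, and more seriously, the splitting-off step that you yourself identify as the crux is not established, and the mechanism you propose for it is not sound: a non-split extension $0 \to Y \to E \to W \to 0$ in $\Gr R$ does not force an indecomposable summand of $E$ of graded length exceeding $G$. In your setup the quotient $W = X/Y$ overlaps in degrees with $Y$ (since $Y$ is generated by the bottom components, higher components of $X$ survive in $W$ in nearby degrees), so no contradiction with the definition of $G_R$ is available; and in the opposite situation, where $W$ is concentrated in degrees separated from those of $Y$ by a gap wider than $d = \max\{|i| \mid R_i \neq 0\}$, the extension splits for elementary degree reasons that never use $G$ at all. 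So the bound $G$ is doing no identifiable work where you place it, and infinitely generated quotients cannot simply be decomposed into indecomposables of bounded graded length without already knowing something like the conclusion.

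For comparison, the paper does not attempt to decompose $X$ into a direct sum at all. It writes $X = \varinjlim Y_j$ with $Y_j$ the submodule generated by the components in degrees $[-j,j]$, and, given a finite tuple, chooses inside $Y_{j+G+d}$ a minimal direct sum $A$ of indecomposable graded summands containing (the image of) the tuple. The bound $G$ enters only here: minimality forces every indecomposable summand of $A$ to meet $[-j,j]$, so $A$ is confined to degrees $[-j-G,\,j+G]$. The cokernels $C_i = Y_{k+i}/Y_k$ (with $k=j+G+d$) live outside $[-k,k]$, so by the syzygy lemma $\Hom_{\Gr R}(\Omega C_i, A)=0$, hence $\Ext^1_{\Gr R}(C_i,A)=0$, and a pullback-diagram argument shows the inclusion $A \to Y_{k+i}$ splits for every $i$. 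Thus $A$ is a finitely presented pure submodule of $X$ containing the tuple, and the Puninski--Rothmaler pp-type criterion for countably generated modules yields pure-projectivity. This route handles two-sided unbounded $X$ and replaces your unproven splitting claim with a degree-separation argument; to repair your proposal you would need essentially these ingredients.
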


We postpone the proof of this proposition to the end of this section, as we will need to review some other facts about pure-projective modules first.  For now, we note some immediate consequences of this result.

\begin{proof}[Proof of Theorem~\ref{thm:FiniteG}]  If $G_R < \infty$, then $pN$ is pure-projective for any f.g. $R$-module $N$.  By Theorem~\ref{thm:GradablePP}, $N$ is gradable.  By Lemma~\ref{lemma:F2F3}, if $R$ has only finitely many f.g. indecomposable gradable modules, up to isomorphism, then $G_R < \infty$ and thus every f.g. $R$-module is gradable.  Thus $R$ has only finitely many f.g. indecomposable modules up to isomorphism.  The converse is trivial.
\end{proof}

We now prepare for the proof of Proposition~\ref{prop:PureProj}.  We begin with a simple observation.

\begin{lemma}\label{lemma:Syz} Let $d = \max \{|i|\ |\ R_i \neq 0\}$.  If a graded module $M_R$ is concentrated in degrees $[a,b]$, then its first syzygy $\Omega M$ is concentrated in degrees $[a-d,b+d]$.
\end{lemma}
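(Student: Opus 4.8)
The plan is to produce a graded projective cover of $M$ and track degrees through the surjection and the kernel. Pick a minimal set of homogeneous generators of $M$; since $M$ is concentrated in degrees $[a,b]$, each generator sits in some degree in $[a,b]$. This gives a homogeneous surjection $\pi : P \to M$ where $P$ is a graded projective module that is a finite direct sum of shifts $R[-j]$ with $j \in [a,b]$. By definition $\Omega M = \ker \pi$ (or a direct summand thereof if one is not using projective covers, but the same degree bound will hold for any graded submodule of $P$ mapping onto a syzygy), so it suffices to bound the degrees in which $P$ itself is nonzero.

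First I would note that a shifted free module $R[-j]$ is concentrated in the degrees where $R$ is nonzero, shifted by $j$: if $R_i \neq 0$ forces $|i| \le d$, then $R$ lives in degrees $[-d,d]$, so $R[-j]$ lives in degrees $[j-d, j+d]$. Since each $j$ used in building $P$ satisfies $a \le j \le b$, every summand $R[-j]$ lives in degrees contained in $[a-d, b+d]$, and hence $P$ — and therefore its submodule $\Omega M$ — is concentrated in degrees $[a-d,b+d]$. This is really the whole argument; it is essentially a bookkeeping computation with graded shifts.

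The one point that needs a word of care is the choice of generators and the fact that a f.g.\ graded module over a graded ring admits a homogeneous generating set lying inside the degrees where $M$ is supported — equivalently, that one may take a graded projective cover (or at least a graded projective presentation) $P \to M$ with $P$ built from shifts $R[-j]$, $a \le j \le b$. For $R$ right artinian and graded this is standard: lift a homogeneous basis of $M/MJ$ (which is a graded semisimple module, concentrated in degrees $[a,b]$ since $M$ is) to homogeneous elements of $M$, and these generate $M$ by graded Nakayama. So I do not anticipate a real obstacle here; the statement is a routine lemma, and the only thing to be careful about is keeping the inequalities $a \le j \le b$ straight and not conflating "$R$ concentrated in $[-d,d]$" with "$R$ concentrated in $[0,d]$" in case the grading is not positive. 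If desired one could even drop the hypothesis that $\Omega M$ is the kernel of a \emph{minimal} cover: the bound holds for the kernel of any homogeneous surjection from a $P$ of the stated form, so the lemma is robust to the precise convention for $\Omega$.
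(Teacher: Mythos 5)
Your proof is correct and follows essentially the same route as the paper: cover $M$ by a graded free (or projective) module generated in homogeneous degrees within $[a,b]$, observe that since $R$ is concentrated in degrees $[-d,d]$ such a module is concentrated in $[a-d,b+d]$, and conclude the same for the kernel and hence for $\Omega M$. The extra remarks about graded Nakayama and minimality are fine but not needed, exactly as you note.
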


\begin{proof}  We have an epimorphism from a  free $R$-module that is generated in degrees between $a$ and $b$ to $M$.  Since $R_i = 0$ for all $i$ with $|i|>d$, such a free module is concentrated in degrees $[a-d,b+d]$.  Thus the kernel of this epimorphism (of which $\Omega M$ is a direct summand) is also concentrated in degrees $[a-b,b+d]$.
\end{proof}

We now review some facts about pure-projective modules over an arbitrary ring $S$.  Let $M$ be a right $S$-module.  By a {\it tuple} $\bar{m}$ in $M$, we mean a finite tuple $(m_1,\ldots,m_n)$ of elements of $M$, which we also treat as a row-matrix, or identify with a map $S^n \rightarrow M$.  Given a pair of rectangular matrices $A$ and $B$ over $S$ with the same number of columns, we can consider the formula 
$$\varphi(\bar{x}): \exists \bar{y} (\bar{x} A = \bar{y} B)$$ in free variables $\bar{x} = (x_1,\ldots, x_n)$ where $n$ is the number of rows of $A$.  
Such a $\varphi$ is called a {\it positive primitive formula} (or a {\it pp-formula} for short).  It is satisfied by a tuple $\bar{m} = (m_1, \ldots, m_n)$ in $M$ if there exist $\bar{b} = (b_1, \ldots, b_k)$ in $M$ such that $\bar{m}A = \bar{b} B$, and then we write $M \models \varphi(\bar{m})$.  For any tuple $\bar{m}$ in $M$, we write $\pp^M(\bar{m})$ for the set of all pp-formulae satisfied by $\bar{m}$ in $M$, and we refer to this set as the {\it pp-type} of $\bar{m}$ in $M$.  It is straightforward to see that pp-types are preserved by homomorphisms: that is, if $f : M \rightarrow N$ is an $S$-module map then $\pp^M(\bar{m}) \subseteq \pp^N(f(\bar{m}))$.   We note that a monomorphism $f: M \rightarrow N$ is {\it pure} if and only if $\pp^M(\bar{m}) = \pp^N(f(\bar{m}))$ for all $\bar{m}$ in $M$ (see \cite{P}, \S 2.3).

Conversely, when $M$ is finitely presented an inclusion of pp-types $\pp^M(\bar{m}) \subseteq \pp^N(\bar{n})$ implies the existence of a morphism $f :M \rightarrow N$ such that $f(\bar{m}) = \bar{n}$  (see Fact 2.1 in \cite{PR}, or Ch. 8 of \cite{P}).

For two pp-formulas $\varphi(\bar{x})$ and $\psi(\bar{x})$ in the same number of variables, we write $\varphi \rightarrow \psi$ if for all tuples $\bar{m}$ in all $S$-modules $M$, $\psi(\bar{m})$ holds whenever $\varphi(\bar{m})$ holds.  We say that a pp-type $p = \pp^M(\bar{m})$ is {\it finitely generated} if there exists a single pp-formula $\varphi(\bar{x}) \in p$ such that $\varphi \rightarrow \psi$ for all $\psi \in p$.  In fact, every pp-type $\pp^M(\bar{m})$ in a finitely presented module $M$ is finitely generated (see Fact 2.3 in \cite{PR} or Prop. 8.4 in \cite{P}).

\begin{propos}[\cite{PR}]  Let $P$ be a countably generated $S$-module.  Then $P$ is pure-projective if and only if the pp-type of any tuple $\bar{m}$ in $P$ is finitely generated. 
\end{propos}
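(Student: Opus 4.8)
The plan is to establish the two implications separately, using free realizations of pp-formulas --- for a pp-formula $\varphi$, a finitely presented module $C_\varphi$ with a distinguished tuple $\bar c$ whose pp-type in $C_\varphi$ is generated by $\varphi$ --- together with the facts already recalled: a finitely presented module has only finitely generated pp-types; an inclusion of pp-types out of a finitely presented module is realized by a homomorphism; and a pure monomorphism preserves pp-types.

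The ``only if'' direction is the easy half. Write $P$ as a direct summand of $F' = \bigoplus_{i\in I} F_i$ with each $F_i$ finitely presented, so the inclusion $P\hookrightarrow F'$ is split, hence pure. A tuple $\bar m$ in $P$ has finite support in $F'$, so it lies in a finite subsum $F = \bigoplus_{i\in I_0} F_i$, which is finitely presented and is itself a (pure) direct summand of $F'$. Since pure monomorphisms preserve pp-types, $\pp^P(\bar m) = \pp^{F'}(\bar m) = \pp^F(\bar m)$, and this type is finitely generated because $F$ is finitely presented.

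For the ``if'' direction I would first invoke the standard fact that $P$ admits a pure epimorphism $\pi : F \to P$ with $F = \bigoplus_\alpha F_\alpha$ a direct sum of finitely presented modules: take the direct sum, over all pairs $(\varphi,\bar a)$ with $\bar a$ a tuple in $P$ satisfying $\varphi$, of the corresponding free realizations $C_\varphi$; surjectivity holds because every element of $P$ occurs as such an $\bar a$ (with $\varphi$ the formula $x = x$), and purity is immediate from the defining property of free realizations. Since a direct summand of a direct sum of finitely presented modules is pure-projective by definition, it suffices to split $\pi$. Fix a generating sequence $m_1, m_2, \dots$ of $P$, set $P_k = m_1 S + \dots + m_k S$ and $\bar m_k = (m_1,\dots,m_k)$, and let $\varphi_k$ be a pp-formula generating $\pp^P(\bar m_k)$ --- this is the only place the hypothesis enters; viewing $\varphi_k$ as a formula in the variables of $\bar m_{k+1}$ that omits the last one, one has $\varphi_{k+1}\to\varphi_k$. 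The crux is to build, by induction on $k$, tuples $\bar f_k$ in $F$ with $\pi(\bar f_k) = \bar m_k$, with $\bar f_{k+1}$ extending $\bar f_k$ by one coordinate, and with $\pp^F(\bar f_k) = \pp^P(\bar m_k)$. Granting this, equality of pp-types --- tested against the pp-formulas ``$\bar x\,r = 0$'' --- shows that the map $S^k \to F$ given by $\bar f_k$ annihilates the kernel of the projection $S^k \to P_k$, hence factors through a map $s_k : P_k \to F$; one checks $\pi s_k$ is the inclusion $P_k\hookrightarrow P$, and the compatibility of the $\bar f_k$ gives $s_{k+1}|_{P_k} = s_k$, so the $s_k$ glue to a splitting $s : P \to F$ of $\pi$.

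The inductive step is the delicate point. Write $\varphi_{k+1} = \exists \bar y\,\theta_{k+1}(\bar x, x', \bar y)$ with $\theta_{k+1}$ a system of linear equations. The formula $\psi(\bar x) := \exists x'\,\exists\bar y\,\theta_{k+1}$ lies in $\pp^P(\bar m_k) = \pp^F(\bar f_k)$, so there is $f_0 \in F$ with $(\bar f_k, f_0)$ solving $\theta_{k+1}$ in $F$ for suitable witnesses; but $\pi(f_0)$ need not equal $m_{k+1}$. The difference $\pi(f_0) - m_{k+1}$ satisfies the pp-formula $\chi(z)$ obtained from $\theta_{k+1}$ by setting $\bar x = 0$ and existentially quantifying the remaining variables, and purity of $\pi$ lifts it to some $v\in F$ still satisfying $\chi$; then $f_{k+1} := f_0 - v$ maps to $m_{k+1}$ and $(\bar f_k, f_{k+1})$ satisfies $\varphi_{k+1}$, so $\pp^F(\bar f_{k+1}) \supseteq \langle \varphi_{k+1}\rangle = \pp^P(\bar m_{k+1})$, while the reverse inclusion is automatic from the map $\pi$. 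The base case $k = 0$ is the empty tuple. I expect this ``relative lifting across a pure epimorphism'', together with the correction by $v$ needed to fix the image, to be the only genuinely subtle ingredient; everything else is bookkeeping with pp-formulas.
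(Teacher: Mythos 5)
The paper itself offers no proof of this proposition: it is imported verbatim from Puninski--Rothmaler \cite{PR}, so there is no internal argument to compare yours against. Your proposal is a correct, self-contained proof, and it is essentially the Raynaud--Gruson style argument used in the cited source (countably generated modules all of whose finite tuples have finitely generated pp-type, i.e.\ countably generated Mittag-Leffler modules, are pure-projective). The ``only if'' half is the standard purity computation and is fine: split monomorphisms are pure, the tuple sits in a finite (hence finitely presented) subsum, and pp-types in finitely presented modules are finitely generated. For the ``if'' half, the three places where something could go wrong all check out. First, the canonical map onto $P$ from the direct sum of free realizations indexed by pairs $(\varphi,\bar a)$ is indeed a pure epimorphism: each distinguished tuple is carried to $\bar a$ by a map that exists because $C_\varphi$ is finitely presented and its pp-type is generated by $\varphi$, and any pp-formula satisfied in $P$ is witnessed by the corresponding summand, which is a direct (hence pure) summand of $F$. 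Second, the inductive lifting works because $\theta_{k+1}$ is a homogeneous linear system: the difference of the two solutions $(\bar m_k,\pi(f_0),\pi(\bar w))$ and $(\bar m_k,m_{k+1},\bar b)$ shows $\pi(f_0)-m_{k+1}$ satisfies $\chi$ in $P$, purity of $\pi$ lifts it to $v$ with the same formula, and subtracting solutions again shows $(\bar f_k,f_0-v)$ realizes $\varphi_{k+1}$ in $F$; since $\varphi_{k+1}$ generates $\pp^P(\bar m_{k+1})$ and $\pi$ gives the reverse inclusion, the pp-type equality propagates. Third, the splitting is assembled correctly: the quantifier-free formulas $\sum_i x_ir_i=0$ lying in the common pp-type kill every relation of $\bar m_k$ (the relation module need not be finitely generated, but this is an elementwise statement), so $s_k:P_k\to F$ is well defined, agrees with the inclusion after composing with $\pi$ since this holds on generators, and the $s_k$ are compatible because $\bar f_{k+1}$ extends $\bar f_k$; their union splits $\pi$, exhibiting $P$ as a direct summand of a direct sum of finitely presented modules. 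The remark that $\varphi_{k+1}\to\varphi_k$ is true but not actually needed. In short: correct, and faithful in spirit to the proof in \cite{PR} that the paper chose to cite rather than reproduce.
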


\begin{coro}\label{cor:PPcriterion} Let $P$ be a countably generated $S$-module such that every finite tuple $\bar{m}$ in $P$ is contained in a f.p. pure submodule of $P$.  Then $P$ is pure-projective.
\end{coro}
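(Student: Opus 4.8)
The plan is to deduce this immediately from the preceding Proposition, which reduces pure-projectivity of the countably generated module $P$ to checking that the pp-type $\pp^P(\bar{m})$ is finitely generated for every finite tuple $\bar{m}$ in $P$. So I would fix an arbitrary tuple $\bar{m} = (m_1,\ldots,m_n)$ in $P$ and apply the hypothesis to obtain a finitely presented pure submodule $M \subseteq P$ whose underlying set contains $m_1,\ldots,m_n$; thus $\bar{m}$ may also be regarded as a tuple in $M$.

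Next I would use purity of the inclusion $\iota : M \hookrightarrow P$: as recalled just before the Proposition (see \cite{P}, \S 2.3), a monomorphism is pure exactly when it preserves pp-types in both directions, so $\pp^M(\bar{m}) = \pp^P(\iota(\bar{m})) = \pp^P(\bar{m})$. Finally, since $M$ is finitely presented, every pp-type realized in $M$ is finitely generated (Fact 2.3 of \cite{PR}, or Prop.~8.4 of \cite{P}); in particular $\pp^M(\bar{m})$, and hence $\pp^P(\bar{m})$, is finitely generated. As $\bar{m}$ was arbitrary, the Proposition yields that $P$ is pure-projective.

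There is essentially no real obstacle in this argument; it is a routine chaining of the cited facts. The only point worth stating carefully is the reading of ``contained in'': the hypothesis is to be understood as saying that all finitely many entries of $\bar{m}$ lie in the submodule $M$, which is precisely what is needed both to view $\bar{m}$ as a tuple in $M$ and to invoke purity of $M \hookrightarrow P$ to identify the two pp-types.
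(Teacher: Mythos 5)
Your argument is correct and is essentially identical to the paper's own proof: both take the f.p.\ pure submodule $A$ containing $\bar{m}$, use purity to get $\pp^P(\bar{m}) = \pp^A(\bar{m})$, note the latter is finitely generated since $A$ is finitely presented, and conclude via the preceding Proposition. No issues.
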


\begin{proof}  Suppose $\bar{m}$ is contained in the f.p. pure submodule $A$ of $P$.  Then $\pp^P(\bar{m}) = \pp^A(\bar{m})$ by the definition of pure submodule, and the latter is finitely generated since $A$ is f.p.  Thus $P$ must be pure-projective. \end{proof}

\noindent
\begin{proof}[Proof of Proposition~\ref{prop:PureProj}] Let $X = \oplus_{i \in \mathbb{Z}} X_i$ with each $X_i$ finite length over $R_0$.  For each $j \geq 0$, we set $Y_j = (\oplus_{i=-j}^j X_i)R$, which is a finitely generated graded submodule of $X$.  We have a directed system of degree-zero monomorphisms $Y_1 \stackrel{f_1}{\longrightarrow} Y_2 \stackrel{f_2}{\longrightarrow} Y_3 \stackrel{f_3}{\longrightarrow} \cdots$ such that each $f_j$ is an isomorphism in degrees $[-j,j]$.  Clearly we have $X = \varinjlim Y_j$.  For $j>i$ we will write $f_{i,j}$ for the composite $f_{j-1}\cdots f_i : Y_i \to Y_j$.

Let $\bar{p}$ be a finite tuple in $X$, and choose $j$ so that $\bar{p}$ is contained in $\oplus_{i=-j}^j X_i$, and hence also in $Y_j$.  Consider $f_{j, j+G+d}(\bar{p})$ in $Y_{j+G+d}$, where $d = \max \{|i|\ | \ R_{i} \neq 0\}$.  We can decompose $Y_{j+G+d}$ into a direct sum of indecomposable graded modules, and for such a decomposition let $A$ be a minimal direct sum of these indecomposable summands for which $f_{j,j+G+d}(\bar{p}) \subset A$, and let $B$ be a complement of $A$.  By minimality of $A$, each indecomposable summand of $A$ must be nonzero in some degree from $[-j,j]$, and thus $A$ is concentrated in degrees $[-j-G,j+G]$.  For any $i\geq 1$ consider the following commutative exact diagram in $\gr R$, where we have put $k := j+G+d$.
$$\xymatrix{ & & 0 \ar[d] & 0 \ar[d] \\ 0 \ar[r] & A \ar@{=}[d] \ar[r] & Y_{k} \ar[d]^{f_{k,k+i}} \ar[r] & B \ar[d]^g \ar[r] & 0 \\
0 \ar[r] & A \ar[r] & Y_{k+i} \ar[d] \ar[r] & B_i \ar[r] \ar[d] & 0 \\ & & C_i \ar[d] \ar@{=}[r] & C_i \ar[d] \\ & & 0 & 0}$$

The top row is the split exact sequence corresponding to the decomposition $Y_k = A \oplus B$ above, and it coincides with the pull-back of the middle row along the map $g: B \rightarrow B_i$.  We also know that $C_i = \coker g \cong Y_{k+i}/Y_{k}$ is concentrated in degrees outside of $[-k,k]$.  Hence, by Lemma~\ref{lemma:Syz} $\Omega C_i$ is concentrated in degrees outside of $[-j-G,j+G]$, and thus $\Hom_{\Gr R}(\Omega C_i, A) = 0$, meaning that $\Ext^1_{\Gr R}(C_i, A) = 0$.  Applying $\Ext^1_{\Gr R}(-,A)$ to the right-most column of the above diagram now shows that $\Ext^1_{\Gr R}(g,A)$ is a monomorphism taking the element of $\Ext^1_{\Gr R}(B_i,A)$ corresponding to the middle row to $0$.  Thus the middle row also splits.  Since the inclusion of $A$ into $Y_{k+i}$ splits for all $i\geq 0$, we see that $A$ is a pure submodule of $X =\varinjlim_i Y_{k+i}$.  That $X$ is pure-projective now follows from Corollary~\ref{cor:PPcriterion}.  
 \end{proof}

\section{Left-right symmetry}

As it is well-known that condition (F1), and equivalently (F2), that $R$ has finite representation type is left-right symmetric, we show here that the remaining conditions (F3) and (F4) are also left-right symmetric when $R$ is left and right artinian.  Our motivation here stems from the still unresolved pure-semisimplicity conjecture.  Recall that a ring $R$ is {\it right pure-semisimple} if every right $R$-module is pure-projective.   The pure-semisimplicity conjecture asserts that this is a left-right symmetric notion, and since it is known that the class of rings of finite representation type coincides with the class of left and right pure-semisimple rings (see \cite{P} for example), this conjecture is equivalent to the statement that all right pure-semisimple rings have finite representation type.  We can thus regard right pure-semisimplicity as a (potential) weakening of finite representation type.  In particular, in Theorem~\ref{thm:FiniteG} we have seen that over a graded right artinian ring $R$ of finite representation type every f.g. module is gradable, but in light of Theorem~\ref{thm:GradablePP} our proof only requires this weaker hypothesis.

\begin{coro}\label{coro:rightPSS} If $R$ is a graded right pure semisimple ring, then every finitely generated right $R$-module is gradable.
\end{coro}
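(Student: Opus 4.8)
The plan is to obtain this as an immediate consequence of Theorem~\ref{thm:GradablePP}, after transporting purity information from $\rMod R$ to $\Gr R$. First I would recall the standard fact that a right pure-semisimple ring is automatically right artinian (see, e.g., \cite{P}), so that $R$ is a graded right artinian ring and Theorem~\ref{thm:GradablePP} applies: it then suffices to prove that for every finitely generated right $R$-module $N$, the pull-up $pN$ is pure-projective \emph{in $\Gr R$}. The crucial ingredient will be a general lemma: \emph{if $qX$ is pure-projective in $\rMod R$, then $X$ is pure-projective in $\Gr R$.} Granting this, the corollary follows at once, since for any f.g. $N$ the module $qpN$ is a right $R$-module and hence pure-projective by hypothesis; thus $pN$ is pure-projective in $\Gr R$ and $N$ is gradable by Theorem~\ref{thm:GradablePP}.

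To prove the lemma I would start from an arbitrary pure exact sequence $0 \to A \to B \to X \to 0$ in $\Gr R$, with $v \colon B \to X$ the (degree-zero) epimorphism, and show it splits in $\Gr R$. Since $q$ is left adjoint to $p$, it preserves directed colimits, and being additive it preserves split short exact sequences; as a short exact sequence in either $\Gr R$ or $\rMod R$ is pure exactly when it is a directed colimit of split ones, applying $q$ produces a pure exact sequence $0 \to qA \to qB \to qX \to 0$ in $\rMod R$. Pure-projectivity of $qX$ then yields a splitting $s \colon qX \to qB$ with $q(v)\,s = 1_{qX}$. Finally I would homogenize $s$: for a homogeneous $x \in X_n$ write $s(x) = \sum_k s(x)_k$ with $s(x)_k \in B_k$, and set $\bar s(x) = s(x)_n \in B_n$, extended linearly. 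A short check — using that $s$ is $R$-linear and that $v$ is homogeneous, so that the degrees line up — shows that $\bar s$ is a degree-zero $R$-homomorphism with $v\bar s = 1_X$. Hence the sequence splits in $\Gr R$, which proves the lemma.

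The only real subtlety is precisely the one the lemma is meant to handle: in Theorem~\ref{thm:GradablePP} the phrase ``pure-projective'' must be read in $\Gr R$ — its proof extracts a \emph{graded} finitely generated direct summand of $pN$ — so one cannot simply quote ``$qpN$ is pure-projective as an $R$-module'' without the bridging argument above. Everything else is routine bookkeeping. Equivalently, the content of the lemma is that right pure-semisimplicity of $R$ forces every graded right $R$-module to be pure-projective in $\Gr R$; from this point of view the corollary says nothing more than that Theorem~\ref{thm:GradablePP} applies to all of $\gr R$ at once.
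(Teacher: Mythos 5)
Your proposal is correct, and at its core it is the same argument the paper intends: the corollary is stated there with no separate proof, precisely because right pure-semisimplicity makes every right $R$-module pure-projective, so Theorem~\ref{thm:GradablePP} applies to $pN$ for every f.g.\ $N$ (and, as you note, right pure-semisimple rings are right artinian, so the theorem's standing hypothesis is satisfied). The genuine difference is your bridging lemma. The paper reads ``$pN$ is pure-projective'' in the ungraded sense --- consistent with how Proposition~\ref{prop:PureProj} and Corollary~\ref{cor:PPcriterion} are formulated for modules over a ring --- so from its point of view the corollary is literally immediate; the cost is that the proof of Theorem~\ref{thm:GradablePP} then silently extracts a \emph{graded} finitely generated direct summand of $pN$ from ungraded pure-projectivity. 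Your lemma (if $qX$ is pure-projective in $\rMod R$, then $X$ is pure-projective in $\Gr R$) supplies exactly that missing translation, and your proof of it is sound: $q$, being a left adjoint and additive, sends a directed colimit of split exact sequences to one, hence preserves pure exactness, and the homogenization $\bar s(x)=s(x)_n$ for $x\in X_n$ is checked to be degree-zero, $R$-linear, and a splitting because $v$ is homogeneous. (Two small points you take for granted, both standard: that purity in $\Gr R$ is characterized by directed colimits of split sequences, and that every graded module admits a pure epimorphism from a direct sum of finitely presented graded modules, so ``every graded pure epi onto $X$ splits'' really does give pure-projectivity in $\Gr R$.) So your route buys a cleaner justification of the step inside Theorem~\ref{thm:GradablePP}, at the price of a lemma the paper never states; the paper's route is shorter but leaves the graded-versus-ungraded purity comparison implicit.
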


In particular, this result may invite one to look for a counterexample to the pure semisimplicity conjecture in a graded ring $R$ which is left pure-semisimple, yet has an ungradable finitely generated right module.   However, no such example can exist since the condition (F4) that every f.g. right $R$-module is gradable turns out to be left-right symmetric.  This can be seen by using the Auslander-Bridger transpose $\Tr$ as follows.  Recall that if $M_R$ is finitely presented with a projective presentation $P_1 \stackrel{f}{\to} P_0 \to M \to 0$, then $\Tr M$ is defined as the cokernel of the map $f^* : P_0^* \to P_1^*$ in $R \lmod$, where $(-)^* = \Hom_R(-,R)$.  Then $\Tr$ induces a bijection between the isomorphism classes of indecomposable nonprojective left and right finitely presented $R$-modules.  Furthermore, if $R$ is graded then the argument in \cite{GoGr2}, shows that $\Tr M$ is gradable if and only if $M$ is gradable for any finitely presented right $R$-module $M$.

\begin{propos}\label{prop:gradableTr}  Let $R$ be a graded right artinian ring with $d= \max\{|n| \mid R_n \neq 0\}$.  Then a finitely presented right $R$-module $M$ is gradable if and only if $\Tr M$ is gradable.  Moreover, for any finitely generated graded module $N$, we have the inequality $$\gl (\Tr N) \leq \gl N + 4d.$$

\end{propos}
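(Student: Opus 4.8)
The plan is to compute $\Tr N$ from a \emph{minimal graded} projective presentation of $N$ and then bound the degrees in which the modules involved can be nonzero; the degree count gives the inequality, and the gradability equivalence drops out of the same construction.

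First I would record the set-up. Since $R$ is right artinian and $J=\rad R$ is homogeneous, every finitely generated graded module $N$ has a minimal graded projective presentation $P_1 \xrightarrow{f} P_0 \xrightarrow{\pi} N \to 0$ in $\gr R$: graded projective covers exist, and a graded projective cover is an ordinary projective cover because $P/JP$ determines $P$, so forgetting the grading turns this into a minimal presentation of $qN$. For a finitely generated graded projective $P = \bigoplus_j R[-c_j]$, one checks directly — by tracking where a homogeneous generator is sent — that $\Hom_R(R[-c],R) \cong R[c]$ as graded left modules, so $P^{*} := \Hom_R(P,R)$ is naturally the graded left module $\bigoplus_j R[c_j]$; moreover $q(P^{*}) = \Hom_R(qP,R)$ since every $R$-linear map between finitely generated graded projectives is a finite sum of homogeneous pieces. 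Hence $f^{*}\colon P_0^{*} \to P_1^{*}$ is a homogeneous map of graded left modules, $\Tr N := \coker f^{*}$ is graded, and applying the exact functor $q$ gives $q(\Tr N) \cong \Tr(qN)$. In particular, $M$ gradable forces $\Tr M$ gradable (take $M \cong qN$); and since $\Tr\Tr M \cong M$ for $M$ without projective summands, while projective modules are gradable, the converse follows as well. This reproves the equivalence of \cite{GoGr2} in a form convenient for the bound.

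For the inequality I may assume $N \neq 0$ and that it is concentrated in degrees $[a,b]$ with $N_a, N_b \neq 0$, so $\gl N = b-a+1$. Because $N/JN$ is a graded quotient of $N$ it is concentrated in $[a,b]$, so a minimal generating set of $N$ lies in degrees $[a,b]$ and $P_0 = \bigoplus_j R[-c_j]$ with each $c_j \in [a,b]$. By Lemma~\ref{lemma:Syz}, $\Omega N = \ker\pi$ is concentrated in degrees $[a-d,b+d]$; hence a minimal generating set of $\Omega N$ lies there and $P_1 = \bigoplus_k R[-c'_k]$ with each $c'_k \in [a-d,b+d]$. Dualizing, $P_1^{*} \cong \bigoplus_k R[c'_k]$, and the summand $R[c'_k]$ is nonzero only in degrees $n$ with $n+c'_k \in [-d,d]$; taking the union over $k$ and using $c'_k \in [a-d,b+d]$ shows $P_1^{*}$ is concentrated in degrees $[-b-2d,\,-a+2d]$. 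Since $\Tr N$ is a homomorphic image of $P_1^{*}$, it is concentrated there too, whence
$$\gl(\Tr N) \le (-a+2d)-(-b-2d)+1 = (b-a+1)+4d = \gl N + 4d.$$

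The argument is a bookkeeping exercise once the conventions are fixed, and I expect the only genuinely error-prone point to be the direction of the shift on the dual: one must be sure that $\Hom_R(R[-c],R)\cong R[c]$ and not $R[-c]$, since this is exactly what places $P_1^{*}$ in degrees $[-b-2d,-a+2d]$. Secondary points to check carefully are that a minimal graded presentation is still minimal after forgetting the grading (so the $\Tr$ computed here is the standard one and commutes with $q$), and that the projective case of $N$ — where $\Tr N = 0$ and the bound is vacuous — is set aside. It is worth noting that the $4d$ splits transparently as $2d$ absorbed in passing from $N$ to its syzygy via Lemma~\ref{lemma:Syz}, plus a further $2d$ contributed by the duality.
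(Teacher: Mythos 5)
Your proposal is correct and follows essentially the same route as the paper: compute $\Tr N$ from a minimal graded projective presentation, observe $q\Tr \cong \Tr q$ to get the gradability equivalence, and then bound degrees via Lemma~\ref{lemma:Syz} and the shift $\Hom_R(R[-c],R)\cong R[c]$, which yields exactly the paper's estimate (the paper's interval $[-(n+2d),\,2d-1]$ is your $[-b-2d,\,-a+2d]$ with $[a,b]=[1,n]$). Your explicit treatment of the converse via $\Tr\Tr M\cong M$ modulo projective summands is a slightly fuller account of what the paper delegates to the argument of Gordon and Green.
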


\begin{proof}  Suppose $M=qN$ is an indecomposable gradable module, with $N$ graded.  Then we may compute $\Tr N$ using a graded minimal projective presentation $P_1 \stackrel{f}{\to} P_0 \to N \to 0$ in $\gr R$.  Then $f^*: P_0^* \to P_1^*$ is a map in $R \lgr$, showing that $\Tr N = \coker f^*$ is also graded.  We have a natural isomorphism $q\Tr \cong \Tr q$, where $q$ is the forgetful functor from graded $R$-modules to all $R$-modules (for both left and right modules).  Thus it follows that $\Tr M = \Tr qN \cong q\Tr N$ is again gradable.

For the second statement, let $n = \gl N$ and assume $N$ is concentrated in degrees $[1,n]$.  Then $P_0$ is generated in degrees $[1,n]$ and thus $P_0$ is concentrated in degrees $[1-d,n+d]$.  Then $P_1$ is generated in degrees $[1-d,n+d]$ and thus $P_1^* = \Hom_R(P_1,R) = \oplus_i \Hom_{\gr R}(P_1,R[i])$ is concentrated in degrees $[-(n+2d), 2d-1]$.  Since $\Tr N$ is a quotient of $P_1^*$, it is concentrated in these same degrees and we have $\gl \Tr N \leq 2d-1+n+2d+1 = n + 4d$.
\end{proof}

\begin{coro}\label{coro:LRsymmetry}  If $R$ is a graded artinian ring, then 
\begin{enumerate}
\item Every finitely generated right $R$-module is gradable if and only if every finitely generated left $R$-module is gradable;
\item There is a bound on the graded lengths of the indecomposable graded right $R$-modules if and only if there is a bound on the graded lengths of the indecomposable graded left $R$-modules.
\end{enumerate} 
\end{coro}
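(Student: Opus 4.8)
The plan is to obtain both equivalences formally from Proposition~\ref{prop:gradableTr} together with the standard behaviour of the Auslander--Bridger transpose, using throughout that $R$ is artinian on both sides. I will use the following facts: for a two-sided artinian ring every finitely generated one-sided module is finitely presented, of finite length, and a finite direct sum of indecomposables; the transpose $\Tr$ restricts to a bijection between the isomorphism classes of indecomposable nonprojective finitely presented left modules and such right modules, with $\Tr\Tr M \cong M$ on each side when the transpose is computed from a minimal projective presentation --- which, for a graded module, may be chosen homogeneously, so that $\Tr M$ is again graded and $q\Tr \cong \Tr q$; and every projective module, as well as every direct summand of a finitely generated gradable module, is gradable by \cite{CF}. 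Finally, the proofs of Proposition~\ref{prop:gradableTr} and of its graded-length inequality $\gl(\Tr N) \le \gl N + 4d$ are symmetric in left and right, so both apply verbatim to left $R$-modules as well.

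For part (1), suppose every finitely generated right $R$-module is gradable and let $L$ be a finitely generated left $R$-module. Decomposing $L = \bigoplus_i L_i$ into indecomposables and using that a direct sum of gradable modules is gradable, it suffices to show each $L_i$ is gradable. If $L_i$ is projective, I invoke \cite{CF}. If $L_i$ is nonprojective, then $M_i := \Tr L_i$ is an indecomposable nonprojective finitely presented right $R$-module, hence gradable by hypothesis; Proposition~\ref{prop:gradableTr} then gives that $\Tr M_i$ is gradable, and $\Tr M_i \cong \Tr\Tr L_i \cong L_i$. So $L$ is gradable, and the reverse implication follows by running the same argument with ``left'' and ``right'' exchanged.

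For part (2), set $d = \max\{|n| \mid R_n \neq 0\}$ and suppose the graded lengths of the indecomposable graded right $R$-modules are bounded by $N_0$. First note that an indecomposable graded projective left $R$-module is a shift of a direct summand of ${}_RR$, hence has graded length at most $\gl({}_RR) \le 2d+1$. Now take an indecomposable graded left $R$-module $L$. If $L$ is projective, then $\gl L \le 2d+1$. If $L$ is nonprojective, I compute $\Tr L$ from a minimal graded projective presentation of $L$; then $\Tr L$ is an indecomposable nonprojective finitely generated graded \emph{right} $R$-module, so $\gl(\Tr L) \le N_0$. Applying the inequality of Proposition~\ref{prop:gradableTr} with $\Tr L$ in the role of $N$, and using $\Tr\Tr L \cong L$, I get $\gl L = \gl(\Tr\Tr L) \le \gl(\Tr L) + 4d \le N_0 + 4d$. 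Hence every indecomposable graded left $R$-module has graded length at most $\max(2d+1,\, N_0 + 4d)$, and the converse follows by symmetry.

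I do not expect a genuine obstacle here: both parts are essentially bookkeeping on top of Proposition~\ref{prop:gradableTr}. The only point needing care is the interaction of $\Tr$ with the grading --- that a finitely presented graded module admits a minimal projective presentation in the graded category, so that $\Tr$ preserves gradedness and commutes with the forgetful functor via $q\Tr \cong \Tr q$, and that $\Tr\Tr$ returns the original indecomposable once projective summands are stripped away --- but this is precisely what is already established in the proof of Proposition~\ref{prop:gradableTr}.
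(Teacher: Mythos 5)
Your proof is correct and follows exactly the route the paper intends: the paper states Corollary~\ref{coro:LRsymmetry} without proof as an immediate consequence of Proposition~\ref{prop:gradableTr} (plus the standard facts about $\Tr$ and the gradability of projectives and summands from \cite{CF}), and your argument is precisely that bookkeeping, carried out correctly in both parts, including the needed left--right symmetric version of the proposition and the graded-length estimate $\gl L \le \gl(\Tr L) + 4d$.
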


Thus we obtain a strengthened version of Corollary~\ref{coro:rightPSS}.

\begin{coro}\label{coro:leftPSS} If $R$ is a graded ring that is right or left pure semisimple, then every finitely presented right and left $R$-module is gradable.  
\end{coro}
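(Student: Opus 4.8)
The plan is to reduce to the right pure-semisimple case and then transport gradability from right modules to left modules by way of the Auslander--Bridger transpose. Replacing $R$ by $R^{\op}$ (which inherits the grading) if necessary, we may assume $R$ is right pure-semisimple. Such a ring is right artinian (see \cite{P}), so Corollary~\ref{coro:rightPSS} applies and every finitely generated right $R$-module is gradable; since a right artinian ring is right noetherian, finitely presented right $R$-modules are exactly the finitely generated ones, so every finitely presented right $R$-module is gradable. This settles the right-module half of the statement.

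For left modules, note that $R$, being right artinian, is semiperfect, so every finitely presented left $R$-module $L$ admits a finite decomposition $L = L_1 \oplus \cdots \oplus L_m$ into indecomposable finitely presented left modules (finitely presented modules over a semiperfect ring decompose into indecomposables with local endomorphism rings). Since $q$ preserves direct sums, it suffices to show each $L_i$ is gradable. If $L_i$ is projective, it is a direct summand of ${}_RR$; because $J = \rad R$ is homogeneous, a complete set of orthogonal primitive idempotents of $R$ may be taken homogeneous, which exhibits ${}_RR$ as a direct sum of graded left submodules, so every indecomposable projective left $R$-module underlies a graded module and is therefore gradable. If $L_i$ is not projective, then by the Auslander--Bridger transpose $L_i \cong \Tr M$ for some indecomposable non-projective finitely presented right $R$-module $M$; as shown above $M$ is gradable, and hence so is $\Tr M \cong L_i$ by Proposition~\ref{prop:gradableTr}. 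In either case $L_i$ is gradable, so $L$ is gradable. The case in which $R$ is left pure-semisimple now follows by applying all of the above to $R^{\op}$ in place of $R$.

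A quicker route to the left-module half is available if one invokes the classical fact that a right pure-semisimple ring is moreover left artinian (\cite{P}): then $R$ is a graded artinian ring, Corollary~\ref{coro:LRsymmetry}(1) converts ``every f.g.\ right $R$-module is gradable'' into ``every f.g.\ left $R$-module is gradable'', and left-noetherianness again identifies finitely presented with finitely generated. So the only delicate point is this two-sidedness bookkeeping: in the transpose argument it is handled by the standard facts about semiperfect rings and about gradability of projectives, and in the shortcut it is simply quoted. Granting either, the rest is nothing more than a direct appeal to Corollaries~\ref{coro:rightPSS} and~\ref{coro:LRsymmetry} together with Proposition~\ref{prop:gradableTr}. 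I expect this bookkeeping --- specifically, being sure that finitely presented left modules behave well (Krull--Schmidt decompositions, gradable indecomposable projectives) even when $R$ is not assumed left artinian --- to be the only real obstacle.
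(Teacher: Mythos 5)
Your main argument is correct and follows essentially the route the paper intends: the pure-semisimple side is handled by Corollary~\ref{coro:rightPSS} (using that right pure-semisimple implies right artinian, so finitely presented equals finitely generated there), and finitely presented modules on the other side are handled by decomposing into indecomposables, grading the projective summands via degree-zero primitive idempotents, and transporting gradability of the non-projective indecomposables through the transpose via Proposition~\ref{prop:gradableTr}. Two cautions, though. First, delete the ``quicker route'': it is \emph{not} a classical fact that a right pure-semisimple ring is left artinian --- this is exactly the open point the paper highlights in the remark following the corollary (``it is not known if $R$ being left pure semisimple implies that $R$ is right artinian''), and Corollary~\ref{coro:LRsymmetry} is only available for two-sided artinian rings, so that shortcut cannot be used; your transpose argument is the whole point of stating the conclusion only for finitely presented modules. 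Second, in the decomposition step you should claim only what you use, namely that a finitely presented left module $L$ over the semiperfect ring $R$ is a \emph{finite} direct sum of indecomposables --- immediate because any direct sum decomposition of $L$ has at most $\mathrm{length}_{R/J}(L/LJ)$ nonzero summands --- not that the summands have local endomorphism rings, which is unnecessary and can fail over semiperfect rings; similarly, the existence of degree-zero primitive idempotents is most cleanly justified by decomposing $R_R$ in $\Gr R$ into graded indecomposable summands and taking the components of $1$, rather than by homogeneity of $J$ alone.
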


Notice that our conclusion is only for {\it finitely presented} modules.  If $R$ is left pure semisimple, then $R$ is left artinian and all finitely generated left modules are finitely presented.  However, it is not known if $R$ being left pure semisimple implies that $R$ is right artinian, and hence $\Tr$ only gives us information about the finitely presented right $R$-modules in general.

\section{Pr\"ufer modules}

Motivated by Ringel's work connecting Pr\"ufer modules to the second Brauer Thrall conjecture, we now consider Pr\"ufer modules that are related to the pull-ups of modules over graded artinian rings.  We begin with some definitions over an arbitrary ring $S$.

\begin{defin} An $S$-module $P$ is a {\bf Pr\"ufer module} if there exists a locally nilpotent, surjective endomorphism $\phi$ of $P$ such that $\ker \phi$ is nonzero and of finite length.  We call $Y := \ker \phi$ the {\bf basis} of $P$, and we write $Y[t]$ for $\ker \phi^t$.  
\end{defin}

If $(P,\phi)$ is a Pr\"ufer module with basis $Y$, we have an increasing chain of submodules $Y=Y[1] \subset Y[2] \subset Y[3] \subset \cdots$ with $P = \cup_{n \geq 1} Y[n]$.  Moreover we have short exact sequences $$\ses{Y}{Y[n+1]}{Y[n]}{}{\phi} \ \ \ and \ \ \ \ses{Y[n]}{Y[n+1]}{Y}{}{\phi^n}$$ for all $n \geq 1$.  In particular, it follows that each $Y[n]$ has finite length.  

If $X$ is a right $S$-module, we write $\Add(X)$ for the full subcategory of $\rMod S$ consisting of direct summands of arbitrary direct sums of copies of $X$.  If $X$ is a direct sum of countably generated strongly indecomposable modules $X_i$ (i.e., modules with local endomorphism rings), then Warfield's generalization of the Krull-Schmidt theorem \cite{War} implies that $\Add(X)$ consists of the modules that are direct sums of copies of the $X_i$.  In particular, this holds if $X$ has finite length.

\begin{defin}  An $S$-module $M$ has {\bf finite type} if it belongs to $\Add(X)$ for some finite length $S$-module $X$. 
\end{defin}

We now summarize the background concerning the second Brauer-Thrall conjecture, following \cite{Ringel2}.  Let $\Lambda$ be a finite-dimensional algebra over a field $k$.  The second Brauer-Thrall conjecture, proved by Bautista when $k$ is algebraically closed, asserts that if $\Lambda$ has infinite representation type then there are infinitely many natural numbers $d$ such that there are infinitely many non-isomorphic indecomposable $\Lambda$-modules of length $d$.  For an artin algebra $\Lambda$, we can modify this conjecture by considering instead the endo-lengths of $\Lambda$-modules, that is, their lengths as modules over their endomorphism rings.  Then the conjecture becomes: if $\Lambda$ has infinite representation type then there are infinitley many natural numbers $d$ such that there are infinitely many non-isomorphic indecomposable $\Lambda$-modules of endo-length $d$.  
In fact, if $\Lambda$ is a finite-dimensional algebra over any field $k$, Crawley-Boevey has shown that this conclusion will hold provided $\Lambda$ has a {\it generic} module \cite{CB}, where we recall a $\Lambda$-module $M$ is generic if it is indecomposable of finite endo-length but not finitely generated.  Ringel has shown that Pr\"ufer modules yield generic modules.

\begin{therm}[3.4 in \cite{Ringel2}]   The following are equivalent for a Pr\"ufer module $M$ over an artin algebra $\Lambda$. 
\begin{enumerate}
\item $M$ is not of finite type.
\item There is an infinite index set $I$ such that the product module $M^I$ has a generic direct summand.
\item For every infinite index set $I$, the product module $M^I$ has a generic direct summand.
\end{enumerate}
\end{therm}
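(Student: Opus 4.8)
The plan is to prove the implications $(3)\Rightarrow(2)\Rightarrow(1)\Rightarrow(3)$; only the last one requires real work.

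$(3)\Rightarrow(2)$ is immediate. For $(2)\Rightarrow(1)$ I argue by contraposition. Suppose $M$ has finite type, say $M\in\Add(X)$ with $X$ of finite length. Then $X$ is endofinite, so the definable subcategory it generates is exactly $\Add(X)$, which consists solely of direct sums of copies of the (finitely many) indecomposable summands of $X$, all of them finitely generated. Since $\Add(X)$ is closed under products, every power $M^{I}$ lies in it; a generic module, being indecomposable and not finitely generated, is therefore not a direct summand of any $M^{I}$. Hence $(2)$ fails, which proves the implication.

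For $(1)\Rightarrow(3)$, I first observe that it suffices to treat $I=\mathbb{N}$, since for any infinite $I$ one has $M^{I}\cong M^{\mathbb{N}}\times M^{I\setminus\mathbb{N}}$, so a generic direct summand of $M^{\mathbb{N}}$ is also one of $M^{I}$. The candidate generic module is built from the chain $Y=Y[1]\subset Y[2]\subset\cdots$ and the surjections $\phi\colon Y[n+1]\twoheadrightarrow Y[n]$ (all kernels $\cong Y$). Form the inverse limit $\widehat{M}=\varprojlim_{n}(Y[n],\phi)$: since each $Y[n]$ has finite length it is pure-injective, the system is Mittag--Leffler, and the resulting pure-exact sequence $0\to\widehat{M}\to\prod_{n}Y[n]\to\prod_{n}Y[n]\to 0$ splits, so $\widehat{M}$ is pure-injective with a complete local endomorphism ring (after passing to an indecomposable summand). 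Inverting the endomorphism induced by $\phi$ then yields a module $G$ which, by exactly the mechanism that produces the generic module of a homogeneous tube from its adic module, should be endofinite. The key step is to realise $G$ as a \emph{direct summand} of $M^{\mathbb{N}}$: using that $M=\bigcup_{n}Y[n]$ and that the single endomorphism $\phi$ renders the chain ``coherent,'' one exhibits a pure submodule of $M^{\mathbb{N}}$ — built from eventually $\phi$-coherent sequences of elements of the $Y[n]\subseteq M$ — that is isomorphic to $G$, and it splits off because $G$ is pure-injective.

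It then remains to check that $G$ is generic, i.e.\ indecomposable, of finite endo-length, and not finitely generated. Indecomposability is arranged in the construction (or follows from $\mathrm{End}_{\Lambda}(G)$ being a localization of a complete local ring, hence local). That $G$ is not finitely generated uses $(1)$: if $G$ had finite length, the construction would force the lengths $\ell_{\Lambda}(Y[n])$ to be eventually constant and $M$ to lie in $\Add(X)$ for a finite-length $X$, contradicting $(1)$. The crux — and the step I expect to be the main obstacle — is finite endo-length of $G$: the pp-definable subgroups of each power $G^{m}$ are subquotients of those of $(M^{m})^{\mathbb{N}}$, and although the pp-lattice of $M$ itself need not have finite length, one must show that passing to the $\phi$-coherent module $G$ collapses it to a lattice of finite length. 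This is precisely where the rigidity of a Prüfer chain is indispensable: the single endomorphism $\phi$ with all layers $Y[n]/Y[n-1]\cong Y$ behaves as a one-parameter family, which both excludes a superdecomposable alternative and feeds into Crawley-Boevey's passage \cite{CB} from one-parameter families to generic (hence endofinite) modules. With $G$ generic, $(3)$ for arbitrary $I$ follows from the initial reduction.
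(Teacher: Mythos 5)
First, a point of orientation: the paper does not prove this statement at all --- it is quoted verbatim from Ringel [Ringel2, Theorem 3.4] and used as a black box in Section 6, so your attempt has to stand on its own. Your implications $(3)\Rightarrow(2)$ and $(2)\Rightarrow(1)$ are essentially correct: for the latter, if $M\in\Add(X)$ with $X$ of finite length, then $X$ is endofinite, $\Add(X)$ is the definable subcategory generated by $X$ (hence closed under products), and by Azumaya's theorem every indecomposable direct summand of a module in $\Add(X)$ is one of the finitely many finite-length indecomposable summands of $X$, hence not generic. You should, however, actually cite or prove the closure of $\Add(X)$ under products; it is the only nontrivial input there.

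The genuine gap is in $(1)\Rightarrow(3)$, and it sits exactly where you flag it. Three steps are asserted rather than proved. (i) The splitting-off step: $Y[n]$ is pure in $M$ only if it is a direct summand, and under hypothesis (1) this is precisely what one expects to fail; consequently neither $\prod_n Y[n]$ nor your submodule of ``eventually $\phi$-coherent sequences'' is visibly pure in $M^{\mathbb N}$, so pure-injectivity of $G$ (even granted) does not by itself make $G$ a direct summand of $M^{\mathbb N}$. (ii) ``Inverting the endomorphism induced by $\phi$'' presumably means forming $\varinjlim(\widehat M\xrightarrow{\hat\phi}\widehat M\to\cdots)$; but direct limits of pure-injectives need not be pure-injective, so even the pure-injectivity of $G$ is unjustified, and indecomposability is not ``arranged in the construction'' --- $\widehat M=\varprojlim Y[n]$ can decompose badly, and $\mathrm{End}_\Lambda(\widehat M)$ need not be local. (iii) Most importantly, finite endo-length of $G$ is the entire content of the theorem, and the appeal to ``the rigidity of a Prüfer chain'' plus Crawley-Boevey's one-parameter-family results is not an argument: Crawley-Boevey's passage to generic modules requires an actual one-parameter family of finite-dimensional modules, and a single chain $Y[1]\subset Y[2]\subset\cdots$ with layers $Y$ is not one. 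As written, $(1)\Rightarrow(3)$ is a plausible outline of a strategy, not a proof; you would need to either carry out Ringel's actual argument or find another route to producing an indecomposable endofinite, infinitely generated direct summand of $M^{\mathbb N}$.
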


To show that a Pr\"ufer module $M$ is not of finite type, it suffices to show that it is not pure-projective.  In fact, it turns out that these two notions are equivalent for Pr\"ufer modules.  Although we don't need this fact here, we include a short proof that may be of independent interest.  We make use of the ``telescoping map theoerm'' from \cite{PR}. 

\begin{therm}[\cite{PR}]  Suppose that $M$ is a countably generated pure-projective module.  If $M$ is the direct limit of a direct system $(N_i, f_{ij})$, $i, j \in I$, then $M \oplus \bigoplus_{i \in I} N_i \cong \bigoplus_{i \in I} N_i$.
\end{therm}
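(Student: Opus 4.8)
The plan is to realise the asserted isomorphism as the splitting of the canonical \emph{telescope} presentation of $M$ as a direct limit, which is always a pure exact sequence, so that pure-projectivity of $M$ does the rest.

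\emph{Step 1: reduce to a countable chain.} Since $M$ is countably generated, fix generators $m_1,m_2,\dots$ of $M$; each $m_k$ lies in the image of the structure map $g_{j_k}\colon N_{j_k}\to M$ for some $j_k\in I$, and by directedness of $I$ the indices may be chosen with $j_1\le j_2\le\cdots$. Setting $J=\{j_n\mid n\ge 1\}$, the chain $N_{j_1}\to N_{j_2}\to\cdots$ carries all the generators of $M$, and one checks that the canonical map $\varinjlim_n N_{j_n}\to M=\varinjlim_{i\in I}N_i$ is an isomorphism: it is onto because the generators are hit, and injective once the chain is chosen so that every element of an $N_{j_n}$ vanishing in $M$ already vanishes somewhere along the chain. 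If we can prove $M\oplus\bigoplus_{j\in J}N_j\cong\bigoplus_{j\in J}N_j$, then adding $\bigoplus_{i\in I\setminus J}N_i$ to both sides gives the theorem. So from now on we may take $I=\mathbb{N}$ and a chain $N_1\xrightarrow{f_1}N_2\xrightarrow{f_2}\cdots$ with structure maps $g_n\colon N_n\to M$.

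\emph{Step 2: the telescope sequence and its splitting.} Consider
\[0\longrightarrow\bigoplus_{n\ge 1}N_n\xrightarrow{\,1-f\,}\bigoplus_{n\ge 1}N_n\xrightarrow{\,\pi\,}M\longrightarrow 0,\]
where $1-f$ sends the $n$th summand into $N_n\oplus N_{n+1}$ by $x\mapsto x-f_n(x)$, and $\pi$ restricts to $g_n$ on the $n$th summand. This is exact, $1-f$ is injective (read off the components beginning with the $N_1$-component), and hence $\ker\pi\cong\bigoplus_{n\ge1}N_n$. The sequence is pure: for any finitely presented module $F$, a homomorphism $F\to M=\varinjlim_n N_n$ factors through some $N_k$, hence through the $k$th inclusion into $\bigoplus_n N_n$, so $\Hom(F,\pi)$ is surjective --- the standard criterion for purity of an epimorphism. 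Since $M$ is pure-projective, the pure exact sequence splits, whence $\bigoplus_{n\ge 1}N_n\cong M\oplus\ker\pi\cong M\oplus\bigoplus_{n\ge1}N_n$. Combined with Step 1 this proves the theorem.

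I expect Step 1 to be the main obstacle. The genuinely delicate point is ensuring that the countable chain extracted to capture the generators of $M$ actually has $M$ as its colimit, i.e.\ that no superfluous elements survive in $\varinjlim_n N_{j_n}$; arranging this requires a careful interleaving of the indices and uses the countable generation of $M$ in an essential way. By contrast, Step 2 is essentially formal: the purity of the telescope presentation of a direct limit is standard, and the splitting is then immediate from pure-projectivity.
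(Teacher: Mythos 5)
The paper does not prove this statement at all---it is quoted from Puninski--Rothmaler \cite{PR} as a black box---so there is no internal proof to compare against; I will assess your argument on its own terms. Your Step 2 is correct and complete for an $\mathbb{N}$-indexed chain: the telescope sequence is exact, $1-f$ is injective, purity of $\pi$ follows because finitely presented modules are compact with respect to direct limits, and pure-projectivity of $M$ splits the sequence, giving $\bigoplus_n N_n \cong M \oplus \bigoplus_n N_n$. This is precisely the ``telescoping map'' of \cite{PR}, and it is all the paper ever uses: in Proposition~\ref{prop:PPprufer} the direct system is the chain $Y[1]\subset Y[2]\subset\cdots$ of finite length modules.

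The gap is exactly where you suspected, in Step 1, and it is not merely delicate---the reduction you propose is impossible in general. Choosing $j_1\le j_2\le\cdots$ so that the images $g_{j_n}(N_{j_n})$ exhaust $M$ is easy (these images form an increasing chain of submodules, so it suffices that they eventually contain each of the countably many generators), and the induced map $\varinjlim_n N_{j_n}\to M$ is then onto. But injectivity requires that every element of each $N_{j_n}$ which dies in $M$ already dies along the chain, i.e., $\ker(g_{j_n})=\bigcup_m\ker(f_{j_n,j_m})$. The theorem imposes no finiteness hypothesis on the $N_i$, so these kernels need not be countably generated and no countable interleaving of indices can capture them. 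Concretely, over $\mathbb{Z}$ let $S$ be uncountable, let $I$ be the set of countable subsets of $S$ ordered by inclusion, and let $N_i=\mathbb{Z}^{(S)}/\mathbb{Z}^{(i)}$ with the quotient maps: then $\varinjlim_I N_i=0$, yet every countable chain in $I$ has nonzero colimit, so no subchain computes $M$. (The theorem's conclusion is trivially true there, but your route to it is blocked.) Note also that, read literally for an arbitrary directed $I$, the statement fails for finite $I$ (a singleton $I$ would give $M\oplus M\cong M$); the result that is actually needed, and that \cite{PR} prove, is the chain version, which your Step 2 establishes. To handle a genuinely arbitrary infinite directed system one has to work with the canonical pure epimorphism $\bigoplus_{i\in I}N_i\to M$ and its kernel directly, rather than replacing the system by a countable subchain with the same colimit.
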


In the above theorem, suppose that all of the $N_i$ have finite length.  Then $\bigoplus_{i \in I} N_i$ is a direct sum of strongly indecomposable modules, and hence $M$ too must be a direct sum of strongly indecomposable modules, which are in addition finite length direct summands of the $N_i$.  Since Pr\"ufer modules are defined in terms of finite length modules, and pure projectives in terms of finitely presented modules, we will need to add the assumption on $S$ that all finite length $S$-modules are finitely presented.  Of course, this is equivalent to assuming that all maximal right ideals of $S$ are finitely generated.  Under this assumption, every finite-type $S$-module is pure-projective.

\begin{propos}\label{prop:PPprufer} Assume all simple $S$-modules are f.p., and let $P_S$ be a Pr\"ufer module.  Then $P$ is pure-projective if and only if it has finite type.
\end{propos}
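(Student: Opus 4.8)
The plan is to proceed as follows. The implication ``module of finite type $\Rightarrow$ pure-projective'' was already recorded in the discussion above, which used the hypothesis that every finite length $S$-module is finitely presented; so the substance lies in the converse. Suppose $P$ is pure-projective, with locally nilpotent surjective endomorphism $\phi$, basis $Y=\ker\phi$ of finite length, and $Y[n]=\ker\phi^n$. Because $P=\bigcup_{n\ge 1}Y[n]$ and each $Y[n]$ has finite length, $P$ is countably generated, and $P=\varinjlim_n Y[n]$ for the directed system whose transition maps are the inclusions $Y[n]\hookrightarrow Y[n+1]$ of finitely presented modules. I would feed this presentation of $P$ as a directed colimit into the telescoping map theorem.

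Concretely, the telescoping theorem (applied as in the remark following it) shows that $P$ is a direct sum $P\cong\bigoplus_j Q_j$ of finite length indecomposable modules, each $Q_j$ isomorphic to a direct summand of some $Y[n]$. I would then sharpen this using the Pr\"ufer structure: since $\phi$ is locally nilpotent and $Q_j$ is finitely generated, $\phi^{m}(Q_j)=0$ for some $m$, i.e., $Q_j\subseteq Y[m]$; and since $Q_j$ is a direct summand of $P$ with $Q_j\subseteq Y[n]\subseteq P$ for all $n\ge m$, the modular law gives $Y[n]=Q_j\oplus\bigl(Y[n]\cap\bigoplus_{i\ne j}Q_i\bigr)$, so $Q_j$ is in fact a direct summand of $Y[n]$ for every $n\ge m$.

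It remains to see that only finitely many isomorphism types $[Q_j]$ occur, which by Warfield's theorem is precisely the assertion that $P$ has finite type. I would reduce this to showing that there is an $N$ such that the inclusion $Y[n]\hookrightarrow Y[n+1]$ splits for all $n\ge N$: granting this, choosing complements $C_n$ with $Y[n+1]=Y[n]\oplus C_n$ for $n\ge N$, one has $C_n\cong Y[n+1]/Y[n]\cong Y$ by the exact sequence $0\to Y[n]\to Y[n+1]\stackrel{\phi^n}{\to}Y\to 0$ recorded earlier, and the resulting chain $Y[N]\subseteq Y[N+1]\subseteq\cdots$ of split inclusions yields $P\cong Y[N]\oplus\bigoplus_{n\ge N}C_n\cong Y[N]\oplus Y^{(\omega)}\in\Add(Y[N]\oplus Y)$, which has finite type. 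The main obstacle is exactly this eventual splitting of the chain $(Y[n])_n$ -- equivalently, the statement that beyond a bounded level of the $\phi$-filtration no new isomorphism types of indecomposable summands appear. I would attack it by contradiction, using that for every $n$ the map $\phi\colon Y[n+1]\to Y[n]$ is surjective with kernel $Y$ of finite length: this severely constrains how the indecomposable summands of $Y[n+1]$ can differ from those of $Y[n]$, and I expect that combining this with the previous paragraph forces, from infinitely many isomorphism types $[Q_j]$, an infinite strictly increasing chain of direct summands inside a single finite length module $Y[m]$ -- an impossibility.
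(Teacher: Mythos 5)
Your set-up is fine and matches the paper's opening moves: the easy direction follows from the standing hypothesis that finite length modules are finitely presented, the telescoping map theorem applied to $P=\varinjlim Y[n]$ gives $P\cong\bigoplus_j Q_j$ with each $Q_j$ finite length indecomposable, and your modular-law observation that any f.g. direct summand of $P$ lying in $Y[n]$ is a direct summand of $Y[n]$ is exactly the kind of step the paper uses. But the proof is not complete: the entire difficulty of the converse is the claim that only finitely many isomorphism types occur (equivalently, your ``eventual splitting'' of the chain $Y[N]\subseteq Y[N+1]\subseteq\cdots$), and at precisely that point you switch from argument to expectation (``I would attack it by contradiction\dots I expect that\dots''). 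Moreover, the contradiction you sketch does not obviously materialize: if infinitely many types $[Q_j]$ occurred, they would appear as summands of larger and larger $Y[n]$'s, and nothing in what you have written forces an infinite strictly increasing chain of summands inside one fixed finite length module $Y[m]$.

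The idea you are missing is to exploit where the \emph{basis} $Y=\ker\phi$ sits in the decomposition. Since $Y$ is finitely generated, $Y\subseteq A:=\bigoplus_{j\in J_0}Q_j$ for a finite subset $J_0$, and with $Q:=\bigoplus_{j\notin J_0}Q_j$ one gets, for all large $n$, $Y[n]=A\oplus B_n$ where $B_n:=Y[n]\cap Q$ (your modular-law step, applied to $A$ rather than a single $Q_j$). Now use the short exact sequence $0\to Y\to Y[n+1]\xrightarrow{\ \phi\ }Y[n]\to 0$: because $Y\subseteq A$, it gives $Y[n]\cong A/Y\oplus B_{n+1}$, so by Krull--Schmidt $B_{n+1}\in\add(Y[n])$ and hence $Y[n+1]=A\oplus B_{n+1}\in\add(Y[n])$. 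Iterating, $\Add\bigl(\bigoplus_{i\ge 1}Y[i]\bigr)=\Add\bigl(\bigoplus_{i\le N}Y[i]\bigr)$, which is finite type; note this route never needs the inclusions $Y[n]\hookrightarrow Y[n+1]$ to split, only that the additive closure stabilizes. So your surjection $\phi\colon Y[n+1]\to Y[n]$ with kernel $Y$ is indeed the right tool, but it becomes effective only once $Y$ is trapped inside the fixed finite subsum $A$; without that step your proposal has a genuine gap where the theorem's content lies.
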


\begin{proof}  Assume that $(P,\phi)$ is a pure-projective Pr\"ufer module with basis $Y$.  By the telescoping map theorem and the remarks following it,   $P$ belongs to $\Add(\bigoplus_{i \geq 1} Y[i])$ and we can write $P = \oplus_{j \in J} Q_j$ for indecomposable finite length modules $Q_j$.  Since $Y \subseteq P$ is finitely generated, it must be contained in a direct summand of $P$ of the form $A:=\oplus_{j \in J_0} Q_j$, where $J_0$ is a finite subset of $J$.  Write $Q = \oplus_{j \in J\setminus J_0} Q_j$ so that $P = A \oplus Q$.  Since $A$ is finitely generated, we know $A \subseteq Y[n]$ for $n$ sufficiently large.  In fact, since $A$ is a direct summand of $P$, it is also a direct summand of each such $Y[n]$, and we can even write $Y[n] = A \oplus B_n$, where we set $B_n := Y[n] \cap Q$, for all $n$ sufficiently large, say for $n \geq N$.  Since $Y = \ker \phi \subseteq A$, the exact sequence $\ses{Y}{Y[n+1]}{Y[n]}{}{\phi}$ shows that $Y[n] \cong A/Y \oplus B_{n+1}$.  Since we also have $Y[n] = A \oplus B_n$, it follows that $Y[n+1] = A \oplus B_{n+1} \in \add(Y[n])$, for all $n \geq N$.  Thus $\Add(\bigoplus_{i \geq 1} Y[i]) = \Add(\bigoplus_{i=1}^{N} Y[i])$, and hence $P$ has finite type.
\end{proof}

We now return to a graded right artinian ring $R$.  As in Section 3, we assume that $R$ is basic so that its Jacobson radical $J$ satisfies $J = \rad R_0 \oplus \bigoplus_{n\neq 0} R_n$.  Thus any f.g. $R$-module $M$ is f.g. over $R_0$ and the length of $M$ is the same over $R$ as over $R_0$.  We write $R_{\geq d}, R_{>d}, R_{\leq d}, \ldots$ for $\oplus_{n \geq d} R_n$ and so on.

  For any f.g. right $R$-module $M$ we can define a Pr\"ufer module $P_M$ as the quotient of $pM$ by the submodule generated by $\oplus_{i<0} Me_i$.  The corresponding endomorphism $\phi$ of $P_M$ is induced by the automorphism $\psi$ of $pM$ that sends $me_i$ to $me_{i-1}$ for all $m \in M$ and all $i \in \mathbb{Z}$:

$$\xymatrix{0 \ar[r] & (\oplus_{i<0} Me_i)R \ar[r] \ar[d]^{\psi_0} & pM \ar[r] \ar[d]^{\psi}_{\cong} & P_M \ar@{-->}[d]^{\phi} \ar[r] & 0 \\ 0 \ar[r] & (\oplus_{i<0} Me_i)R \ar[r]  & pM \ar[r]  & P_M \ar[r] & 0} $$

By the snake lemma, $\phi$ is onto and $\ker \phi \cong \coker\ \psi_0$.  Clearly $\psi_0$ maps onto the degree $i$ part of $(\oplus_{i<0} Me_i)R$ for all $i< -1$.  For $d \geq 0$, the cokernel of $\psi_0$ in degree $d$ is $MR_{\geq d+1}/MR_{\geq d+2}$ (as an $R_0$-module).  While in degree $-1$, the cokernel of $\psi_0$ is isomorphic to $M/MR_{\geq 1}$, which is nonzero since $R_{\geq 1} \subseteq J$ and $M/MJ \neq 0$ by Nakayama's lemma. 
Thus the kernel of $\phi$ can be described as $$\ker \phi \cong \coker\ \psi_0 \cong M/MR_{\geq 1} \oplus \bigoplus_{d \geq 0} (MR_{\geq d+1}/MR_{\geq d+2}),$$  which can be thought of as the `positive' associated graded module of $M$.  If $R$ is positively graded, this coincides with usual associated graded module of $M$, up to a degree shift.  The $R$-module action is induced by the $R$-action on $M$. 
Furthermore, this kernel has finite length over $R_0$, and hence over $R$, since $M$ does.

As it is also clear that $\phi$ is locally nilpotent, since $\psi^{n+1}(Me_n) \subseteq (\oplus_{i<0}Me_i)R$ for every $n \geq 0$, we conclude that $P_M$ is a Pr\"ufer module.   We remark that (at least in some cases) it appears $P_M$ can also be obtained using Ringel's ladder construction of Pr\"ufer modules \cite{Ringel1}.  

\begin{propos}\label{prop:Prufer} With notation as above, the Pr\"ufer module $P_N$ has finite type if and only if $N_R$ is gradable.  
\end{propos}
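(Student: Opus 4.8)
The plan is to reduce both directions to a comparison of pure-projectivity. Since $R$ is right artinian, every simple $R$-module is finitely presented, so Proposition~\ref{prop:PPprufer} applies to the Pr\"ufer module $P_N$: it has finite type if and only if it is pure-projective. By Theorem~\ref{thm:GradablePP}, $N$ is gradable if and only if $pN$ is pure-projective. So it suffices to prove that $pN$ is pure-projective if and only if $P_N$ is. Throughout I use the explicit description of the submodule $U := (\oplus_{i<0}Ne_i)R \subseteq pN$ with $pN/U = P_N$: from the defining exact sequence $0 \to U \to pN \to P_N \to 0$ one reads off $(P_N)_m \cong N/NR_{\geq m+1}$ for $m \geq 0$ (and $(P_N)_m = 0$ for $m<0$), so in particular $(P_N)_m \cong N$ as an $R_0$-module for every $m \geq d := \max\{\,|i| : R_i \neq 0\,\}$.

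Suppose first that $N$ is gradable, $N \cong qM$ with $M$ a f.g.\ graded module which, after a shift, we take to be concentrated in degrees $[0,\ell-1]$, $\ell = \gl M$; note $M$ has finite length. By Lemma~\ref{lemma:pullupGraded}, $pN \cong pqM \cong \bigoplus_{i \in \mathbb{Z}} M[i]$, with $R$ acting componentwise. Tracking which of the generators $ne_i$ ($i<0$) lands in each summand $M[i]$, and using that $R$ preserves the summands, one identifies
$$U \;\cong\; \bigoplus_{i \geq \ell} M[i] \ \oplus\ \bigoplus_{i=1}^{\ell-1} M^{<i}[i],$$
where $M^{<i} \subseteq M$ is the graded submodule generated by $\bigoplus_{j<i} M_j$. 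Hence
$$P_N = pN/U \;\cong\; \bigoplus_{i \leq 0} M[i] \ \oplus\ \bigoplus_{i=1}^{\ell-1} \bigl(M/M^{<i}\bigr)[i],$$
so $qP_N$ is a direct sum of copies of the finitely many finite-length modules $N$ and $q(M/M^{<i})$ ($1 \leq i \leq \ell-1$). Thus $P_N$ lies in $\Add$ of a finite-length module and has finite type. (In contrapositive form, this is the implication producing a Pr\"ufer module of infinite type from any ungradable $N$.)

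Conversely, suppose $P_N$ has finite type, hence is pure-projective; then $P_N$ decomposes in $\Gr R$ as a direct sum $\bigoplus_\lambda Z_\lambda$ of finite-length graded indecomposables, and finite type forces only finitely many isomorphism classes among the $qZ_\lambda$, so by Corollary~\ref{coro:isomorphicGradable} the $Z_\lambda$ fall into finitely many shift-orbits, with a uniform bound on their graded lengths. Write $P_N \cong \bigoplus_{i=1}^{s} \bigoplus_{n \in \mathbb{Z}} \bar W_i[n]^{\oplus c_{i,n}}$ with $\bar W_1,\dots,\bar W_s$ graded finite-length indecomposables and $c_{i,n} \geq 0$. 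Since $P_N$ is concentrated in degrees $\geq 0$ while $(P_N)_m \cong N$ for all $m \geq d$, comparing homogeneous components in this decomposition for varying $m \gg 0$ forces the multiplicities $c_{i,n}$ to be eventually constant as $n \to -\infty$, say $c_{i,n} = c_i$ for $n \ll 0$. Evaluating in a high degree then gives $N \cong (P_N)_m \cong \bigoplus_{i=1}^{s}(q\bar W_i)^{\oplus c_i} = q\bigl(\bigoplus_i \bar W_i^{\oplus c_i}\bigr)$, so $N$ is gradable.

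The step needing care is the eventual constancy of the multiplicities $c_{i,n}$: one must deduce it from the fact that $(P_N)_m$ is independent of $m$ for $m \gg 0$ together with the extra structure that the Pr\"ufer map $\phi \colon P_N \to P_N[-1]$ is a surjection with finite-length kernel which restricts to an isomorphism in high degrees. This amounts to a Krull--Schmidt/pigeonhole analysis of the matrix of $\phi$ with respect to $\bigoplus_\lambda Z_\lambda$, made awkward by the fact that $R$ need not be positively graded (so one cannot simply truncate $P_N$ to high degrees). An alternative is to apply the telescoping map theorem to $P_N = \varinjlim_t Y[t]$, obtaining a fixed finite-length graded $W$ with $P_N \in \Add(W)$ and all $Y[t] \in \add(W)$, and then to recover $N$ by reading off the homogeneous components of $W$ and its shifts.
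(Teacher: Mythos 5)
Your forward direction is correct and is essentially the paper's own argument: compute $pN\cong\bigoplus_{i}M[i]$ via Lemma~\ref{lemma:pullupGraded}, identify the submodule $U=(\oplus_{i<0}Ne_i)R$ inside this decomposition, and conclude that $P_N$ is a direct sum of finitely many isomorphism classes of finite-length modules; your explicit formula for $U$ is a correct, more detailed version of what the paper asserts. The converse, however, has a genuine gap --- in fact two. The one you flag yourself (eventual constancy of the multiplicities $c_{i,n}$) is indeed unproved, and neither of your sketches closes it. But the more serious defect is the final step even if constancy were granted: a homogeneous component $(P_N)_m$ is only an $R_0$-module, not an $R$-submodule of $P_N$, so the identification $(P_N)_m\cong N$ for $m\ge d$ is an isomorphism of $R_0$-modules only, and likewise $\bigoplus_n(\bar W_i[n])_m\cong q\bar W_i$ holds only over $R_0$. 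Your conclusion therefore reads $N\cong\bigoplus_i(q\bar W_i)^{\oplus c_i}$ as $R_0$-modules, which says nothing about gradability of $N$ as an $R$-module. (A smaller unaddressed point: a finite-type decomposition of $P_N$ a priori lives in $\rMod R$, and you need one in $\Gr R$.)

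The paper's converse avoids multiplicity counting entirely and instead runs through Proposition~\ref{prop:NgeneratesM}, which is the tool built for exactly this purpose. From finite type one extracts a finite-length graded direct summand $M$ of $P_N$ concentrated in degrees $>2d$ and containing $(P_N)_k$ for some $k$. Since $U$ vanishes in degrees $\ge d$, the projection $pN\to P_N$ is an isomorphism in those degrees, and a short factoring argument (through the submodule of $P_N$ generated in degrees $>2d$) lifts $M$ to a graded direct summand of $pN$. Proposition~\ref{prop:NgeneratesM} then yields $qM\in\add(N)$; and because $M$ contains $(P_N)_k\cong Ne_k$, the composite $qM\to qpN\to N$ given by $\delta_N$ is onto, so $N$ is a direct summand of $qM$ and hence gradable, exactly as in the proof of Theorem~\ref{thm:GradablePP}. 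You should replace the multiplicity argument with this route (or genuinely carry out the telescoping-map alternative, which would still have to end with an application of Proposition~\ref{prop:NgeneratesM} rather than a comparison of homogeneous components).
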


\begin{proof} First assume $N_R$ is gradable and write $N = qM$ for a graded module $M$.  We may assume that $M$ is concentrated in degrees $0$ and above. Then $pN =pqM \cong \oplus_{i \in \mathbb{Z}} M[i]$.  If $d = \max\{|i| \mid R_i \neq 0\}$, then the submodule $(\oplus_{i<0} Ne_i)R$ of $pN$ is concentrated in degrees less than $d$, and thus $P_N$, which is defined as the quotient of $pN$ by this submodule, will be the direct sum of $\oplus_{i \geq d} M[-i]$ and a finite length submodule generated in degrees $[0,d-1]$. 

Conversely, assume that $P_N$ has finite type.   We can write $P_N = M' \oplus M''$ where $M'$ is a f.g. graded direct summand of $P_N$ that contains $(P_N)_i$ for all $0\leq i \leq 2d$.  Now choose $M$ to be a f.g. graded direct summand of $M''$ that contains $(P_N)_k$ for some $k$.  In particular $M$ is concentrated in degrees larger than $2d$.  We claim that $M$ is also isomorphic to a direct summand of $pN$.  To see this, let $U$ be the graded submodule of $P_N$ generated by all elements in degrees $> 2d$.  The map $U \to P_N \to M$ is still onto, and the splitting of the map $P_N \to M$ factors through the inclusion $U \to P_N$.  Since $U$ is generated in degrees $>2d$, $U$ is concentrated in degrees $>d$, and thus the inclusion $U \to P_N$ factors through the projection $pN \to P_N$, which is an isomorphism in degrees $>d$.  It follows that the map $M \to U \to pN$ splits the $pN \to P_N \to M$, and hence $M$ is a direct summand of $pN$.  Moreover, since $M$ was chosen to contain $(P_N)_k$, the composition of the inclusion $qM \to qpN$ with the natural map $\delta_N : qpN \to N$ is onto.  Now, as in the proof of Theorem~\ref{thm:GradablePP}, we see that $N$ is isomorphic to a direct summand of $M$ and hence is gradable.

\end{proof}

\begin{coro}\label{cor:BT2} Let $\Lambda$ be a finite-dimensional algebra over a field $k$.  If $\Lambda$ admits a grading in which a f.g. $\Lambda$-module $M$ is not gradable, then $\Lambda$ has a Pr\"ufer module that is not of finite type.  Hence $\Lambda$ has a generic module.
\end{coro}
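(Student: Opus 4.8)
\medskip

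The plan is to chain together the results already assembled in this section. Suppose $\Lambda$ is a finite-dimensional $k$-algebra admitting a grading in which some finitely generated $\Lambda$-module $M$ is not gradable. First I would reduce to the basic case: Morita equivalence preserves both the property ``has an ungradable f.g.\ module'' and ``has a generic module,'' and a graded ring is Morita equivalent to a basic graded ring, so we may assume $\Lambda$ is basic with $J = \rad\Lambda_0 \oplus \bigoplus_{n\neq 0}\Lambda_n$, which is the standing hypothesis under which $P_N$ was constructed above. (Alternatively, one can simply note that the construction of $P_M$ and Proposition~\ref{prop:Prufer} only used that length over $\Lambda$ agrees with length over $\Lambda_0$, which holds whenever $J\supseteq \bigoplus_{n\neq 0}\Lambda_n$; but invoking the basic reduction is cleanest.)

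\medskip

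Next, apply the construction preceding Proposition~\ref{prop:Prufer} to $N := M$: since $\Lambda$ is right artinian and $M$ is f.g., the module $P_M$ is a genuine Pr\"ufer module (locally nilpotent surjective $\phi$ with $\ker\phi$ of finite length, as verified there). By Proposition~\ref{prop:Prufer}, $P_M$ has finite type if and only if $M$ is gradable; since $M$ is \emph{not} gradable by hypothesis, $P_M$ is \textbf{not} of finite type. Now Theorem~3.4 of \cite{Ringel2} (quoted above) applies to the Pr\"ufer module $P_M$ over the artin algebra $\Lambda$: since $P_M$ is not of finite type, for any infinite index set $I$ the product module $P_M^I$ has a generic direct summand. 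In particular $\Lambda$ has a generic module.

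\medskip

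The only real point requiring care is the Morita reduction, i.e.\ checking that ``admits a grading with an ungradable f.g.\ module'' passes between Morita-equivalent algebras compatibly with the grading; this is routine but is the step where one must be slightly attentive, since a priori the grading on $\Lambda$ need not be the ``obvious'' one. If one prefers to avoid it entirely, the alternative is to observe that the hypothesis that length over $\Lambda$ equals length over $\Lambda_0$ — equivalently $\bigoplus_{n\ne 0}\Lambda_n \subseteq J$ — already suffices for everything in this section, and for a positively graded or basic $\Lambda$ this holds automatically; the general finite-dimensional case then follows since any f.d.\ graded algebra is graded Morita equivalent to a basic one. Either way, once the basic (or reduced) case is in place, the corollary is immediate from Proposition~\ref{prop:Prufer} together with Ringel's Theorem~3.4.
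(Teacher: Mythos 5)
Your argument is exactly the paper's (implicit) proof: the corollary is obtained by combining Proposition~\ref{prop:Prufer} (which shows $P_M$ is not of finite type when $M$ is ungradable) with Ringel's Theorem~3.4 to produce a generic module. Your additional care about reducing to the basic case fills a step the paper leaves tacit (Section~6 assumes $R$ basic throughout), and your handling of that reduction is correct.
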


\begin{remark} \emph{There do exist algebras for which every grading is trivial in the sense of \cite{GoGr1}.   In particular, there are algebras $\Lambda$ of infiinite representation type for which the hypotheses of the above corollary can never be satisfied: i.e., for any possible grading of $\Lambda$, every $\Lambda$-module is gradable.  For example, take $\Lambda = kQ$ where $Q$ is a tree that is not Dynkin.} 
\end{remark}
\section{Some open questions}

In this section we propose several interesting questions about graded artinian rings and their modules.  These questions are mostly motivated by our attempts to generalize other results of Gordon and Green for graded artin algebras.  For a graded artin algebra $\Lambda$, Gordon and Green have shown that any component of the AR-quiver of $\Lambda$ that contains a gradable module must consist entirely of gradable modules \cite{GoGr2}.  In particular, since any indecomposable projective $\Lambda$-module is gradable, every inececomposable module in a component of the AR-quiver of $\Lambda$ that contains a projective module must also be gradable.  Now, Auslander and Smal\o \ have shown that $\ind \Lambda$ always has a preprojective partition, and each indecomposable preprojective module is a successor of an indecomposable projective module in the AR-quiver of $\Lambda$ \cite{AS}.  Thus every preprojective $\Lambda$-module is gradable.  In our setting we can use the following definition of preprojective modules from \cite{TFRT}, based on one of the equivalent conditions in Theorem 5.1 of \cite{AS}.

\begin{defin} An indecomposable $R$-module $Y_R$ is {\bf preprojective} if there exists a finitely presented module $X_R$ such that $Y$ has no direct summands in $\add(X)$, and for every non-split epimorphism $f : Z \to Y$ in $\rmod R$, $Z$ must have a direct summand in $\add(X)$.
\end{defin}

Thus, hoping to generalize the situation for graded artin algebras, we propose the following.

\begin{question}\label{Q:preprojective} For a graded right artinian ring $R$, is every preprojective right $R$-module gradable?
\end{question}

In general, a right artinian ring $R$ does not have an AR-quiver, but there is another nice consequence of Gordon's and Green's AR-quiver result that still makes sense in our context, and which we believe should hold.  Recall that if $\C$ is an additive subcategory of an additive category $\A$, a morphism $f: C \rightarrow X$ is a {\it right $\C$-approximation} of $X$ if $C \in \C$ and every map $g: C' \rightarrow X$  with $C' \in \C$ factors through $f$.  The subcategory $\C$ is {\it contravariantly finite} in $\A$ if every $X \in \A$ has a right $\C$-approximation. In \cite{CaHa}, Carlson and Happel show that the indecomposable objects of a proper contravariantly finite subcategory $\C$ of $\rmod \Lambda$ cannot consist of a union of connected components of the AR-quiver of $\Lambda$.  In fact, the proof of Theorem 2.1 in \cite{CaHa} establishes the following:
 
 \begin{propos}[Theorem 2.1 in \cite{CaHa}]  If $f: X \rightarrow M$ is a right $\C$-approximation of a $\Lambda$-module $M$ that does not belong to $\C$.  Then $X$ contains an indecomposable direct summand $U$ for which there is an irreducible map $g : U \rightarrow V$ with $V$ not in $\C$.
  \end{propos}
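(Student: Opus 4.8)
The plan is to put $f$ and $M$ into a convenient normal form, harvest the desired irreducible map in every ``shallow'' configuration that can occur, and then reduce the one remaining ``deep'' configuration to an induction on the length of $M$.

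First I would make a few harmless reductions. Since $\C$ is closed under direct summands and $M \notin \C$, decomposing $M$ into indecomposables we may pick an indecomposable summand $M_l \notin \C$; composing $f$ with the projection onto $M_l$ yields a right $\C$-approximation of $M_l$ with the same source $X$ (any $C \to M_l$ with $C \in \C$ lifts along $f$ and then projects), so we may assume $M$ is indecomposable. Replacing $X$ by the source of a minimal right $\C$-approximation, a direct summand of the given one, we may assume $f$ is right minimal; and $f$ is not a split epimorphism, else $M$ would be a summand of $X \in \C$. Write $X = \bigoplus_i U_i$ into indecomposables. If some $U_i$ admits an irreducible map to a module outside $\C$ we are done, so I would assume the contrary for contradiction; in particular no $U_i$ has an irreducible map to $M$ itself.

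Next I would factor $f$ through the almost split structure at $M$. Let $g \colon E \to M$ be the minimal right almost split morphism ending at $M$ --- the surjection of the almost split sequence $0 \to \tau M \to E \xrightarrow{g} M \to 0$ when $M$ is non-projective, and the inclusion $\rad M \hookrightarrow M$ when $M$ is projective --- so $f = gh$ for some $h \colon X \to E$, and the components $g_i \colon E_i \to M$ of $g$ are irreducible. Since no $U_i$ maps irreducibly to $M$, no $E_i$ is a direct summand of $X$; hence $h \in \rad(X, E)$, and $h$ is not a split epimorphism. If $E \in \C$, then $g = fk$ for some $k$, whence $g = g(hk)$, and right-minimality of $g$ forces $hk \in \aut(E)$ and hence $h$ split epi, a contradiction; so $E \notin \C$ and some summand $E_j \notin \C$. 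If the component $h_j \colon X \to E_j$ is not in $\rad^2(X, E_j)$, then some $U_i$ has an irreducible map to $E_j \notin \C$ and we are done. The surviving case is that $h_j \in \rad^2(X, E_j)$ for every $E_j \notin \C$; writing $g_i = f k_i$ with $k_i \in \rad(E_i, X)$ for the summands $E_i \in \C$ and collecting terms, one gets $f(1_X - \theta) = \sum_{E_j \notin \C} g_j h_j$ with $\theta \in \rad \Endo(X)$, so that $1_X - \theta$ is invertible and $f \in \rad^3(X, M)$.

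It remains to exclude this deep case, and this is the step I expect to carry the real content. The natural move is an induction on the length of $M$ --- anchored, as in the application in \cite{CaHa}, at a module of minimal length outside $\C$ (which is automatically indecomposable, and, when $\C$ contains the projectives, has surjective minimal right $\C$-approximation and is therefore non-projective, with every strictly shorter module already in $\C$) --- feeding the summands $E_j \notin \C$ of the almost split middle term, together with their own minimal right $\C$-approximations, back into the induction and transporting the irreducible map thus produced back along $h_j$. The main obstacle is precisely making this descent well-founded: for non-projective $M$ the almost split middle term $E$ can be \emph{longer} than $M$, so one cannot naively induct on $\mathrm{length}(M)$ through $E$, and the correct organisation of the induction (or the correct way to stay within a fixed finite region of the AR quiver) is the point that needs a genuine idea. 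Everything up to the deep case is bookkeeping with almost split morphisms and the radical filtration.
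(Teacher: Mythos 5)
Your argument is not complete, and you say so yourself: everything up to the ``deep case'' ($h_j \in \rad^2(X,E_j)$ for every summand $E_j \notin \C$ of the almost split middle term, hence $f \in \rad^3(X,M)$) is routine, and the deep case is exactly where the content of the theorem lies. The induction you propose to exclude it is not well-founded: for non-projective $M$ the middle term $E$ of the almost split sequence ending at $M$ can have larger length than $M$, the component of the AR quiver containing $M$ need not be finite, and nothing in the hypotheses (which concern approximations by $\C$, not finiteness of anything outside $\C$) bounds the region of the quiver your descent would traverse. So as written this is a reduction of the statement to an unproved claim, not a proof.

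The underlying problem is the choice of dualization: factoring $f$ through the minimal \emph{right} almost split map ending at $M$ throws away your grip on $X$, which is where the conclusion lives. The Carlson--Happel argument instead applies minimal \emph{left} almost split maps starting at the indecomposable summands of $X$, and then no induction or $\rad^2$ analysis is needed. Keep your reductions ($M$ indecomposable, $f\colon X \to M$ a right minimal right $\C$-approximation, hence not a split epimorphism, $X=\bigoplus_i U_i$), and suppose for contradiction that every irreducible map out of every $U_i$ has target in $\C$. Each component $f_i \colon U_i \to M$ is not a split monomorphism (otherwise $M \cong U_i \in \C$), so $f_i$ factors through the minimal left almost split map $h_i \colon U_i \to E_i$, say $f_i = s_i h_i$. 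By the contradiction hypothesis every indecomposable summand of $E_i$ lies in $\C$, so $E_i \in \C$ and $s_i = f t_i$ for some $t_i \colon E_i \to X$ because $f$ is a $\C$-approximation. Assembling the maps $t_i h_i$ over the summands gives $f = f\theta$ with $\theta \in \rad\Endo(X)$, since each component of $\theta$ factors through a left almost split map, which lies in the radical. Right minimality of $f$ forces $\theta$ to be an automorphism, contradicting $\theta \in \rad\Endo(X)$. Hence some $U_i$ admits an irreducible map to a module $V \notin \C$, which is the assertion. I recommend you replace your right-almost-split analysis by this left-almost-split argument; your preliminary reductions can be kept verbatim.
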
 
 
 \begin{coro} Let $(\Gamma_i)_{i \in I}$ be a collection of components of the AR-quiver of $\Lambda$.  Suppose that $\C$ is the full subcategory of $\rmod \Lambda$ consisting of all direct sums of modules from the various $\Gamma_i$.  Then $M$ has a right $\C$-approximation if and only if $M \in \C$.
 \end{coro}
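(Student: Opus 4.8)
The plan is to prove the nontrivial implication by contradiction, using the preceding Proposition (Theorem 2.1 in \cite{CaHa}) as a black box and otherwise only elementary Auslander--Reiten theory. The easy direction is immediate: if $M \in \C$ then $1_M : M \to M$ is a right $\C$-approximation. So I would assume $M \notin \C$ while $M$ admits a right $\C$-approximation $f : X \to M$. The Proposition then yields an indecomposable direct summand $U$ of $X$ together with an irreducible map $g : U \to V$ with $V \notin \C$. Since $X$ lies in $\C$, it is a finite direct sum of indecomposable modules drawn from the components $\Gamma_i$, so by Krull--Schmidt its indecomposable summand $U$ is (isomorphic to) a vertex of one of them, say $U \in \Gamma_{i_0}$; I would fix this component for the rest of the argument.

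Next I would locate $V$ among the data attached to $U$ in the AR-quiver. Let $p : U \to E$ be a minimal left almost split morphism in $\rmod \Lambda$, which exists because $U$ is indecomposable over an artin algebra. Since $g$ is irreducible it is in particular not a split monomorphism, so it factors as $g = h p$ for some $h : E \to V$. Applying the definition of irreducibility to this factorization, either $p$ is a split monomorphism --- which it is not, being left almost split --- or $h$ is a split epimorphism; hence $h$ is a split epimorphism and therefore $V$ is isomorphic to a direct summand of $E$. I note that this step uses nothing about $V$ being indecomposable, so it is harmless if the cited Proposition only delivers a decomposable $V$.

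To conclude, I would invoke the standard description of the arrows of the AR-quiver: the indecomposable direct summands of $E$ are precisely the modules $N$ admitting an arrow $[U] \to [N]$, and in particular they all lie in the connected component $\Gamma_{i_0}$ of $U$. Since $V$ is a direct summand of $E$, every indecomposable summand of $V$ lies in $\Gamma_{i_0} \subseteq \bigcup_{i \in I} \Gamma_i$, which is exactly the assertion that $V \in \C$. This contradicts the choice of $V$, so we must have $M \in \C$.

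The step I expect to carry the weight is the factorization argument ``$g = h p$ irreducible and $p$ not a split mono, hence $h$ a split epi, hence $V$ a summand of $E$'': this is the one place where irreducibility of $g$ is actually consumed, and it is what forces $V$ back into the component of $U$. Everything else is bookkeeping with the definition of $\C$ and the standard correspondence between minimal left almost split maps and arrows of the AR-quiver; the only point needing a little care is that the Carlson--Happel Proposition is used exactly as stated (with $V$ possibly decomposable), which, as observed above, causes no difficulty.
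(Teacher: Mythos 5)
Your proof is correct, and it is exactly the derivation the paper intends: the corollary is stated as an immediate consequence of the Carlson--Happel proposition, and you supply the omitted details (locating $U$ in a component $\Gamma_{i_0}$, factoring the irreducible map through the minimal left almost split map $p:U\to E$ to force $V$ into $\add E$, and concluding $V\in\C$ via the standard description of arrows in the AR-quiver). Your remark that the argument tolerates a decomposable $V$ is a sensible precaution and does not affect correctness.
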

 
Since the subcategory $\G^f_\Lambda$ of f.g. gradable $\Lambda$-modules satisfies the hypothesis of the corollary, we know that a f.g. $\Lambda$-module $M$ has a right $\G^f_\Lambda$-approximation if and only if it is gradable.

\begin{question}\label{Q:gradedApproximation} Let $R$ be a graded right artinian ring and suppose that the finitely generated module $N_R$ has a right $\G^f_R$-approximation.  Does it follow that $N$ is gradable?
\end{question}

In a different direction, we do not know whether $(F4) \Rightarrow (F3)$ holds in general.  In fact, the only examples we know where every f.g. right $R$-module is gradable occur when $R$ is either of finite representation type or else graded equivalent (in the sense of \cite{GoGr1}) to a trivially graded ring.  In either case, it follows that $G_R < \infty$.

\begin{question}\label{Q:F4F3} Suppose $R$ is a right artinian graded ring such that every f.g. right $R$-module is gradable.  Does it follow that $G_R$ is finite?
\end{question}

Finally, we point out that the implication $(F4) \Rightarrow (F1)$ fails trivially.  To see this, take any right artinian ring $R$ of infinite representation type and give it a trivial grading (e.g., $R=R_0$).  Then every $R$-module $M$ is trivially gradable (e.g., $M=M_0$).


\begin{thebibliography}{99}




\bibitem{AS} M. Aulsander and S. Smal\o.  \emph{Preprojective modules over Artin algebras.} J. Algebra 66 (1980), no. 1, 61--122.

\bibitem{B} G. Bergman.  \emph{On Jacobson radicals of graded rings.}  (Preprint).


\bibitem{BG} K. Bongartz and P. Gabriel.  \emph{Covering spaces in representation theory.}  Invent. math. 65, (1982), 331--378.


\bibitem{CF} V. P. Camillo and K. R. Fuller.  \emph{On graded rings with finiteness conditions.}  Proc. Amer. Math. Soc. 86 (1982), no. 1, 1--5.

\bibitem{CaHa} J. F. Carlson and D. Happel.  \emph{Contravariantly finite subcategories and irreducible maps.}  Proc. Amer. Math. Soc.  117 (1993), no. 1, 61--65.


\bibitem{CB} W. Crawley-Boevey.  \emph{Modules of finite length over their endomorphism ring.}  Representations of algebras and related topics (Kyoto, 1990), 127--184, London Math. Soc. Lecture Note Ser., 168, Cambridge Univ. Press, Cambridge, 1992. 



\bibitem{GoGr1} R. Gordon and E. L. Green.  \emph{Graded Artin algebras.}  J. Alg. 76, (1982), 111--137.

\bibitem{GoGr2} R. Gordon and E. L. Green.  \emph{Represenation theory of graded Artin algebras.}  J. Alg. 76, (1982), 138--152.


\bibitem{Green} E. L. Green.  \emph{Graphs with relations, coverings and group-graded algebras.}  Trans. Amer. Math. Soc.  279  (1983),  no. 1, 297--310.


\bibitem{TFRT} I. Herzog.  \emph{A test for finite representation type.} J. Pure. Appl. Alg. 95 (1994) 151--182.



\bibitem{SPPM} T.Y. Lam.  \emph{Serre's problem on projective modules.}  Springer Monographs in Mathematics. Springer-Verlag, Berlin, 2006.

\bibitem{P} M. Prest.  \emph{Model theory and modules.}  London Mathematical Society Lecture Note Series, 130. Cambridge University Press, Cambridge, 1988.

\bibitem{PR}  G. Puninski and P. Rothmaler.  \emph{Pure-projective modules.}  J. London Math. Soc. 71 (2005), no. 2, 304--320.


\bibitem{Ringel1} C. M. Ringel.  \emph{The ladder construction of Pr\"ufer modules.}  Rev. Un. Mat. Argentina 48 (2007), no. 2, 47--65. 

\bibitem{Ringel2} C. M. Ringel.  \emph{The relevance and ubiquity of Pr\"ufer modules.}  Representation theory, 163--175,
Contemp. Math., 478, Amer. Math. Soc., Providence, RI, 2009. 


\bibitem{War} Warfield.  \emph{A Krull-Schmidt theorem for infinite sums of modules.}   Proc. Amer. Math. Soc. 22 (1969), 460--465.

\end{thebibliography}
\end{document}